\documentclass[12pt]{article}

\usepackage[utf8]{inputenc}
\usepackage[T1]{fontenc}
\usepackage[english]{babel}
\usepackage{amsmath, amssymb, amsthm}
\usepackage{geometry}
\usepackage{graphicx}
\usepackage{enumitem}
\usepackage{float}
\usepackage{hyperref}
\usepackage{cleveref}
\usepackage{xcolor}
\usepackage{mathrsfs}
\usepackage{tikz}
\usepackage{esint}
\numberwithin{equation}{section}

\newtheorem{theorem}{Theorem}[section]

\newtheorem{lemma}[theorem]{Lemma}

\newtheorem{proposition}[theorem]{Proposition}
\theoremstyle{definition}

\newtheorem{remark}[theorem]{Remark}

\newcommand{\R}{\mathbb{R}}

\newcommand{\eps}{\varepsilon}
\newcommand{\divg}{\text{div}}

\def \dis {\displaystyle}

\def \om {\omega}
\def \Om {\Omega}

\def\ep{\varepsilon}

\def \ZZ {\mathbb Z}
\def \RR {\mathbb R}

\def \beq {\begin{equation}}
\def \eeq {\end{equation}}
\def \ba {\begin{array}}
\def \ea {\end{array}}
\def \bs {\bigskip}

\def \ecart {\noalign{\smallskip}}
\def\into{\int_\Om}

\title{Memory Effects in a Fluid via Homogenization Theory}

\title{Memory effects in the homogenization of a viscous fluid containing small elastic particles}
\author{ 
\footnotesize
\centerline{\begin{tabular}{c}
\large Juan Casado-D\'{\i}az, \hskip0.5cm  Antonio Rafael Hormigo-Molina.\\
\\
Dpto. de Ecuaciones Diferenciales y An\'alisis Num\'erico\\ Universidad de Sevilla
\\
jcasadod@us.es,\hskip0.5cm  ahormigo@us.es
\end{tabular}}
}

\begin{document}

\maketitle
\begin{abstract}
    This work deals with a fluid-structure interaction problem involving an incompressible viscous fluid and a periodic array of tiny elastic inclusions in a three-dimensional domain. We focus on a critical regime where the volume fraction of the inclusions tends to zero, but their density and stiffness are scaled so that they still influence the macroscopic behavior. The asymptotic regime depends on the parameter $\gamma=\lim_{\varepsilon\to 0}r_\varepsilon/\varepsilon^3\in [0,+\infty]$, where $\varepsilon$ is the period and $r_\varepsilon$ the inclusion size.

    For $\gamma=0$ the solid and fluid equations uncouple in the limit. For $\gamma\in (0,+\infty]$, we show that the macroscopic fluid velocity satisfies a Stokes system with memory: the standard Stokes equations are augmented by convolution-in-time terms arising from the elastic vibrations of the inclusions. The existence of the convolution kernels is established for both $\gamma\in(0,+\infty)$ and $\gamma=\infty$ using semigroup theory. For $\gamma=\infty$ the microscopic problem reduces to a purely conservative elastodynamic equation and the kernels can be computed explicitly via spectral decomposition. They are oscillatory and almost-periodic in time, yielding a persistent, non-decaying memory. For $\gamma\in(0,+\infty)$ the microscopic elastodynamic equation is coupled to a local Stokes problem, whose viscous dissipation yields memory kernels that decay to zero at infinity.
\end{abstract}
\vskip .5cm\noindent
{\bf Keywords:} Memory effects, fluid-structure interaction,  viscous fluid, elastic inclusions, periodic homogenization.
\par\bs\noindent
{\bf Mathematics Subject Classification:} 76A10, 35B27

\section{Introduction}

In this paper, we show how a Newtonian viscous fluid can develop long-range memory effects. When a large number of tiny elastic inclusions are periodically suspended in a viscous fluid, the interplay between the elastic vibrations of the particles and the viscous dissipation of the surrounding medium generates convolution-in-time terms in the effective equations. Such emergent behaviors are central to the design of advanced complex fluids, such as locally resonant acoustic metamaterials, where heavy and highly deformable inclusions are used to achieve specific wave-damping or shock-absorbing properties. The resulting macroscopic model is no longer a standard Stokes system, but a Stokes system with memory. Remarkably, the character of this memory---oscillatory and persistent versus decaying---depends on a single dimensionless parameter that encodes the relative size of the inclusions. These phenomena, invisible in any single-phase model, emerge naturally through the mathematical framework of \emph{homogenization theory}.

\par\medskip

The homogenization of Stokes flow in perforated domains with \emph{rigid} obstacles is by now a classical subject. In a landmark series of papers, Allaire \cite{All1,All2} established that, depending on the relative size of the obstacles, the effective model can range from the Stokes equations with a Brinkman correction to those with a Darcy term. Closely related is the work of Cioranescu and Murat \cite{CiMu}, who identified the appearance of ``strange terms'' in the homogenization of elliptic problems in perforated domains. When the obstacles are rigid, the energy stored in the interaction is entirely dissipative, and the effective models remain local in time.

\par\medskip

A fundamentally different situation arises when the inclusions are \emph{elastic}: they can deform, store potential energy, and vibrate. The interplay between the viscous dissipation of the surrounding fluid and the elastic oscillations of the inclusions creates a mechanism for \emph{memory effects} in the effective medium. The emergence of non-local terms in time from homogenization of systems that are local in time was first identified by Sanchez-Palencia \cite{San}, who showed that periodic mixtures of elastic solids and viscous fluids lead to viscoelastic effective media with exponentially decaying memory kernels. Unlike the setting in \cite{San}, where both phases are connected, our work considers isolated elastic inclusions. Francfort and Suquet \cite{FrSu} demonstrated that composite materials with instantaneous viscous dissipation at the microscale can homogenize into effective media with fading memory, showing how microscopic friction transforms into macroscopic long-range relaxation. In a related direction, Briane and Casado-D\'{\i}az \cite{BrCa,BrCa2} proved that the homogenization of magneto-elastodynamic systems with strongly varying magnetic fields and with soft inclusions gives rise to a viscoelastic effective behavior with memory terms, further illustrating how the elastic-viscous coupling at the microscale generates non-local-in-time effects in the macroscopic model. For further developments on the homogenization of viscoelastic models and the rigorous derivation of memory effects, we also refer the reader to \cite{AKPS, DMSa}.

\par\medskip

The problem addressed in the present paper is most closely related to recent works on the homogenization of fluid-structure interaction systems in periodic media.

\par\medskip

In \cite{Cas2}, Casado-Díaz studied a coupled linearized elastodynamic--Stokes system in a periodic domain where the \emph{fluid phase} consists of isolated inclusions surrounded by a connected elastic matrix. This geometry models, in a simplified way, biological media such as skin, where cells can be idealized as fluid-filled compartments embedded in an extracellular matrix (see, e.g., \cite{Mur} for related spatial models in mathematical biology, and \cite{CFLM} for recent advances in the homogenization of liquid-filled elastomers). The main result of \cite{Cas2} is the derivation of a macroscopic viscoelastic equation with a convolution memory kernel acting on the strain history. The memory kernel decays exponentially to zero, the exponential decay being a consequence of the viscous dissipation inside the fluid inclusions.

\par\medskip

In the present work we consider the complementary geometry. We study an incompressible viscous fluid filling a three-dimensional bounded domain $\Omega\subset\mathbb R^3$, in which a large number of tiny elastic inclusions are periodically distributed. Specifically, the inclusions occupy a family of sets of the form $\varepsilon k+r_\varepsilon\mathcal O$ with $k\in\mathbb Z^3$, where $\varepsilon$ is the period and $r_\varepsilon\ll\varepsilon$ is the size of each inclusion. Now it is the fluid phase that is connected and the elastic phase that is disconnected. This inversion of the topology has deep consequences for the character of the memory. We note that this setting was already studied by Fleury \cite{Fleury} in the context of acoustic wave propagation. However, under the standard scaling assumptions of \cite{Fleury} (densities and compressibilities of both phases of the same order), no resonance-driven memory effects arise and the effective model reduces to a Darcy-type law with a complex permeability (for other rigorous derivations of Darcy's law in non-periodic suspensions of solid particles, we also refer to \cite{CaCaLu}). The contrast with our work is precisely the highly contrasted scaling, which triggers a resonant regime leading to qualitatively different memory terms.

\par\medskip

The microscopic model couples the linearized elasticity system in the solid inclusions with the Stokes equations in the surrounding fluid, with the standard interface conditions of velocity continuity and stress equilibrium on the boundary of each inclusion.  However, the density and the elastic coefficients of the solid must be rescaled in order for the inclusions---whose total volume fraction tends to zero---to have a non-trivial effect in the limit.  Specifically, we impose the density of order $\varepsilon^3/r_\varepsilon^3$ and the elasticity tensor of order $\varepsilon^3/r_\varepsilon$.  This critical scaling ensures that the kinetic energy and the elastic energy stored in each vanishingly small inclusion remain of the same order as the kinetic energy of the fluid.

\par\medskip

While this scaling may appear exotic at first sight, it is physically well-motivated by the theory of locally resonant acoustic metamaterials. In the pioneering work of Liu et al.\ \cite{LiuScience}, it was shown experimentally that composite materials consisting of heavy metallic cores coated by a thin layer of soft silicone rubber and embedded in an epoxy matrix can exhibit sonic band gaps at unexpected frequencies. The physical mechanism is local resonance: the heavy core, with its large inertia, vibrates against the restoring force of the soft coating, creating internal oscillators that interact with the propagating wave. From the theoretical point of view, Auriault and Boutin \cite{AuBo} used multi-scale asymptotic homogenization to show rigorously that in highly contrasted elastic composites---including three-component systems with hard inclusions coated by soft material---long-wavelength macroscopic waves induce local resonances in the soft phase, generating a frequency dependent effective tensorial density different from the real scalar density. At the macroscopic scale, non local effect in time appears. The key feature of these metamaterials, from our perspective, is precisely that the resonant units are \emph{heavy} (high density) and enclosed in \emph{soft} elastic coatings (low effective stiffness), which matches our scaling regime. In this way, our critical regime can be understood as a continuum-mechanics idealization of locally resonant metamaterial inclusions: the parameter $\varepsilon^3/r_\varepsilon^3$ represents the high effective density of the composite inclusion, while $\varepsilon^3/r_\varepsilon$ represents its low effective stiffness.

\par\medskip

Our main results are the following. After establishing the well-posedness and energy estimates for the $\varepsilon$-dependent system, we pass to the limit using the periodic unfolding method and the techniques developed in \cite{Cas}. The asymptotic behavior depends crucially on the limit parameter $\gamma = \lim_{\varepsilon\to 0} r_\varepsilon / \varepsilon^3 \in [0, +\infty]$. We extend our study to all possible values of $\gamma$, the most interesting cases being $\gamma \in (0, +\infty)$ and $\gamma = \infty$.

For $\gamma = 0$, the solid particles are too small to affect the macroscopic flow, and the fluid and solid equations uncouple in the limit. However, for $\gamma \in (0, +\infty]$, we show that the limit behavior can be described by a single closed integro-differential equation for the macroscopic fluid velocity. This equation takes the form of a \emph{Stokes system with memory}: the standard Stokes equations are modified by a time-convolution term of the velocity.

For the particular case $\gamma = \infty$, the macroscopic velocity $v$ of the fluid and the microscopic deformation $\hat u$ of the inclusions satisfy a coupled two-scale system  with the fluid velocity as Dirichlet boundary data for the elastic deformation. For this case $\gamma = \infty$, the explicit form of the convolution kernels can be obtained via spectral decomposition of the Dirichlet elastic operator, illustrating an oscillatory behavior of the memory kernel, which differs fundamentally from the exponentially decaying kernels obtained in the model with a connected solid phase \cite{Cas2}. Since the elastic inclusions are isolated and the microscopic equation has no internal viscous dissipation, the microscopic elastic oscillations are purely conservative. As a result, the memory kernel is \emph{oscillatory and almost-periodic}---it does not decay at infinity. This represents a qualitatively new type of effective behavior: a Newtonian viscous fluid whose macroscopic response acquires a persistent, non-decaying memory due to the purely conservative elastic vibrations of the suspended particles.

In contrast, for the intermediate regime $\gamma \in (0, +\infty)$, the microscopic elastodynamic equation is coupled to a local microscopic Stokes problem. The viscous dissipation of the local Stokes problem damps the elastic oscillations, and we show that the memory kernels decay to zero at infinity (Theorem \ref{enlgafi}  and Remark~\ref{rem:damped}). Thus, in contrast with the persistent, non-decaying memory of the case $\gamma=\infty$, the effective memory is of fading type.

\par\medskip

\subsection*{Organization of the paper}

The paper is organized as follows. In Section~2, we collect the notation used throughout the paper. In Section~3, we introduce the microscopic model and the scaling assumptions. Section~4 establishes uniform a priori estimates for the deformation, velocity, and pressure. In Section~5, we use these estimates together with the periodic unfolding method to pass to the limit and derive the homogenized system, first as a coupled two-scale problem and then as a closed integro-differential equation with memory for the macroscopic fluid velocity.

\section{Notation}
\begin{itemize}
    \item We denote by $\R^{3\times 3}_s$ the space of symmetric matrices of dimension $3\times 3$ with real coefficients.
    \item For two vectors $a,b\in\R^3$, we write $a\cdot b=\sum_{i=1}^3 a_ib_i$. For two matrices $\xi,\eta\in\R^{3\times 3}$, we write $\xi:\eta={\rm trace}(\xi\eta^T)=\sum_{i,j=1}^3\xi_{ij}\eta_{ij}$.
    \item For a vector function $u$, we denote by $e(u)$ the symmetric part of its gradient, i.e., $e(u)=(Du+Du^T)/2$. If there is an ambiguity on the variable of derivation, we use $e_x(u)$ or $e_y(u)$.
    \item For a measurable set $U\subset\R^3$, we denote by $\chi_U$ its characteristic function. For a measurable function $u$ in $U$, we denote by $u\chi_U$ its extension by zero to $\R^3$.
    \item For two normed spaces $X_1$, $X_2$, we denote by ${\cal L}(X_1;X_2)$ the space of continuous linear functions from $X_1$ into $X_2$. When $X_1=X_2$, we simply write ${\cal L}(X_1)$.
    \item For $g\in H^{-1/2}(\Gamma)$ and $u\in H^{1/2}(\Gamma)$ we use the notation $\int_\Gamma gu\,d\sigma$ to mean the duality product $\langle g,u\rangle_{H^{-1/2}(\Gamma),H^{1/2}(\Gamma)}$.
    \item For an open set $U\subset\R^3$, we denote by ${\cal D}(U)$ the space of $C^\infty$-functions with compact support in $U$, and by ${\cal D}'(U)$ its topological dual, i.e., the space of distributions in $U$.
    \item $D^{1,2}(\R^3\setminus\overline{{\cal O}})$ denotes the space of functions $w\in L^6(\R^3\setminus\overline{{\cal O}})$ such that $\nabla w\in L^2(\R^3\setminus\overline{{\cal O}})$.
    \item We denote by $I$ the identity operator. The underlying space will be understood from the context.
    \item We denote by $C$ a generic positive constant which can change from line to line.
    \item We denote by $O_\ep$ a generic sequence which tends to zero with $\ep$ and can change from line to line.
    \item We denote by $Y$ the unit cube $(-1/2,1/2)^3\subset\R^3$.
    \item For $k\in\ZZ^3$ and $\ep>0$, we denote $C_k^\ep:=\ep(k+Y)$.
    \item We define $\kappa:\R^3\to\ZZ^3$ by the condition $\kappa(x)\in\ZZ^3$, $x-\kappa(x)\in Y$, a.e.\ $x\in\R^3$. For $\ep>0$, $\ep\kappa(x/\ep)$ is the center of the cube $C_k^\ep$ containing $x$.
\end{itemize}

\section{Problem setting}
Our goal is to study the asymptotic behavior of an incompressible viscous fluid in which small elastic particles of size $r_\eps$ are immersed, arranged periodically with a small period $\eps\gg r_\eps$. Assuming that the density and the viscosity of the fluid are constants of order one, we impose the density of the solid particles of order $\eps^3/r_\eps^3$ and the coefficients of the corresponding elasticity tensor of order $\eps^3/r_\eps$. Note that this implies that the density is of a much higher order of magnitude than the elastic coefficients. Thus, the elastic particles are highly deformable. \par
Assuming that the elastic deformations are sufficiently small, the problem can be modeled by the linearized system
\begin{equation}
    \begin{cases}\dis
        {\eps^3\over r_\eps^3}\rho^s\partial^2_{tt} u_{\eps} - {\eps^3\over r_\eps} \divg_x(A e_x(u_{\eps})) = f_{\eps}             & \dis\text{in } (0,T)\times \Omega_{\eps}^s,       \\ \ecart
        \dis \rho^l \partial_t v_{\eps} - 2 \mu \divg_x(e_x(v_{\eps})) + \nabla_x p_{\eps} = f_{\eps}                               & \dis \text{in } (0,T) \times \Omega_{\eps}^l,     \\ \ecart
        \divg_x v_{\eps} = 0                                                                                                        & \text{in } (0,T) \times \Omega_{\eps}^l,          \\ \ecart
        \dis    \partial_t u_{\eps} = v_{\eps}, \ {\eps^3\over r_\eps} A e_x(u_{\eps}) \nu = (2 \mu e_x(v_{\eps}) - p_{\eps} I) \nu & \text{on } (0,T) \times \partial \Omega_{\eps}^s, \\ \ecart
        v_{\eps} = 0                                                                                                                & \text{on } (0,T) \times \partial \Omega,          \\ \ecart
        \left. u_{\eps} \right|_{t=0} = u_{\eps}^0, \ \left. \partial_t u_{\eps} \right|_{t=0} = u_{\eps}^1                         & \text{in } \Omega_{\eps}^s,                       \\ \ecart
        \left. v_{\eps} \right|_{t=0} = v_{\eps}^0                                                                                  & \text{in } \Omega_{\eps}^l,
    \end{cases}
    \label{Peps}
\end{equation}
where $\Om\subset\R^3$ is a connected bounded open set  whose boundary  can be locally written as the graph of a Lipschitz function
and $\eps,r_\eps>0$, such that
\begin{equation}\label{relpar}
    \eps\to 0,\qquad  \frac{r_{\eps}}{\eps} \to 0.
\end{equation}
The solid part $\Om_\eps^s$ and the fluid part $\Om_\eps^l$ are defined as
\begin{equation}\label{deOme}
    \Omega_{\eps}^s = \bigcup_{\eps k+\eps Y\subset\Om\atop k\in \ZZ^3}( \eps k + r_{\eps}{\cal O}), \qquad \Omega_{\eps}^l = \Omega \setminus \overline{\Omega_{\eps}^s},\end{equation}
where
\beq\label{defYO} {\cal O}\ \hbox{ open, with }\ \overline{{\cal O}}\subset Y:=\Big(-{1\over 2},{1\over 2}\Big)^3.
\eeq
The boundary of the open set ${\cal O}$  is assumed to be   locally  the graph of a Lipschitz function. Moreover $Y\setminus \overline{{\cal O}}$ is connected. \par
Through translation we can also assume that the center of mass of $\partial{\cal O}$ coincides with the origin, that is,
\begin{equation} \label{prcmO}
    \int_{\partial{\cal O}} y \, d\sigma(y) = 0.
\end{equation}\par
We will also introduce
\beq\label{defOmse} \Om^\ast_\ep=\bigcup_{\ep (k+Y)\subset\Om}\ep(k+Y).\eeq\par
The coefficients $\rho^s,\rho^l$ and $\mu$ in (\ref{Peps}) are assumed to be positive constants,  the tensor $A$ belongs to the space $\mathcal{L}\left(\R^{3\times 3}_s\right)$ and satisfies the ellipticity and symmetry conditions
\begin{equation} \label{proA}
    \left\{\ba{l} \exists\, \alpha > 0 \text{ such that } A\xi : \xi \geq \alpha |\xi|^2 \quad \forall \xi \in \R^{3\times 3}_s,\\ \ecart\dis
    A \xi : \eta = A \eta : \xi \quad \forall \xi, \eta \in \R^{3\times 3}_s.\ea\right.
\end{equation}
The vector $\nu$ in (\ref{Peps}) refers to the outer unit normal vector on $\partial\Om_\ep^s$ that point outwards ${\cal O}$.\par
With respect to the right-hand side and the initial conditions let us assume
\beq\label{hipcoe} \ba{l}\dis f_{\eps} \in L^1(0,T;L^2(\Omega))^3,\quad u_{\eps}^0 \in H^1(\Omega_{\eps}^s)^3,\quad u_{\eps}^1 \in L^2(\Omega_{\eps}^s)^3,\\ \ecart\dis v_{\eps}^0 \in L^2(\Omega_{\eps}^l)^3,\quad {\rm div}_xv_{\eps}^0=0\ \hbox{ in }\Om_\ep^l,\quad v_{\eps}^0 \cdot\nu=0\ \hbox{ on }\partial\Om,\ea\eeq
where now $\nu$ is  the outer unit normal vector on $\partial\Om$.\par
From the physical point of view, the unknown functions $u_\eps$, $v_\eps$ and $p_\eps$ respectively represent the deformation of the solid particles, the velocity of the fluid and its pressure.  The coefficients $\eps^3 \rho^s/r_\eps^3$, $\rho^l$ and $\mu$ are the density of the elastic particles, the density of the fluid and the viscosity coefficient. The tensor  $\eps^3 A/r_\eps$ is the elasticity tensor and $f_\eps$ is the exterior force acting on $\Om$.\par\medskip
The following theorem proves the existence of a unique solution for (\ref{Peps}). It does not use the particular structure of our set $\Om_\eps^s$.
A similar result where, contrary to the case presented here, it is the solid part which surrounds the liquid part has been proved in \cite{Cas2}.

\par

\begin{theorem}[Existence and uniqueness] \label{Thexsol} Let $\omega, \Omega$ be two bounded open sets in $\R^3$ such that $\overline{\omega} \subset \Omega$, whose boundaries can be locally written as the graph of a Lipschitz function. We also consider three positive constants $\rho^s,\rho^l,T$, and  a tensor $A\in {\cal L}(\R^{3\times 3}_s)$ which satisfies (\ref{proA}).
    Then, for every
    $f \in L^1(0,T;L^2(\Omega))^3$, $u^0 \in H^1(\omega)^3$, $u^1 \in L^2(\omega)^3$ and $v^0 \in L^2(\Omega \setminus \overline{\omega})^3$ such that
    $${\rm div}_x v^0=0\ \hbox{ in }\Om\setminus\overline\om,\quad v^0\cdot\nu=0\ \hbox{ on }\partial\om,$$
    there exists a unique solution $(u, v, p)$ of
    \begin{equation}
        \begin{cases}
            \rho^s \partial_{tt} u - {\rm div}_x(A e_x(u)) = f                      & \text{in } (0,T) \times \omega,                               \\ \ecart
            \rho^l \partial_t v - 2\mu\, {\rm div}_x\,e_x(v) + \nabla_x p = f       & \text{in } (0,T) \times (\Omega \setminus \overline{\omega}), \\ \ecart
            {\rm div}_x v = 0                                                       & \text{in } (0,T) \times (\Omega \setminus \overline{\omega}), \\ \ecart
            \partial_t u = v, \ A e_x(u) \nu = (2\mu e_x(v) - pI) \nu               & \text{on } (0,T) \times \partial \omega,                      \\ \ecart
            v = 0                                                                   & \text{on } (0,T) \times \partial \Omega,                      \\ \ecart
            \left. u \right|_{t=0} = u^0, \ \left. \partial_t u \right|_{t=0} = u^1 & \text{in } \omega,                                            \\ \ecart
            \left. v \right|_{t=0} = v^0                                            & \text{in } \Omega \setminus \overline{\omega},
        \end{cases}
        \label{FSI}
    \end{equation}
    with
    $$
        u \in C^1([0,T];L^2(\om))^3\cap C^0([0,T];H^1(\om))^3,
    $$
    $$        v \in C^0([0,T]; L^2(\Om\setminus\overline\om))^3,\quad        p={dP\over dt}\ \hbox{ with }P  \in C^0([0,T];L^2(\Omega \setminus \overline{\omega})),\ P_{|t=0}=0\ \hbox{ in }\Om\setminus\overline\om.
    $$
    Moreover, such solution satisfies the energy identity
    \beq\begin{array}{l}\dis
        \frac{1}{2} \frac{d}{dt} \left( \rho^s \int_{\omega} |\partial_t u|^2 \, dx + \int_{\omega} A e_x(u) : e_x(u) \, dx + \rho^l \int_{\Omega \setminus \overline{\omega}} |v|^2 \, dx \right) \\ \ecart\dis\hskip2cm
        + 2\mu \int_{\Omega \setminus \overline{\omega}} |e_x(v)|^2 \, dx = \int_{\omega} f \cdot \partial_t u \, dx + \int_{\Omega \setminus \overline{\omega}} f \cdot v \, dx\quad \hbox{in }(0,T).
        \label{EnergyIdentityFSI}
    \end{array}\eeq
\end{theorem}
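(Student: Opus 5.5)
We recast (\ref{FSI}) as a first-order evolution equation in a Hilbert space and apply semigroup theory, in the spirit of \cite{Cas2}. The velocity field $w$ equals $\partial_t u$ in $\omega$ and $v$ in $\Omega\setminus\overline\omega$; the interface condition $\partial_t u=v$ forces $w$ to have no jump across $\partial\omega$. We therefore introduce
$$V=\big\{w\in H^1_0(\Omega)^3:\ {\rm div}_x w=0 \hbox{ in }\Omega\setminus\overline\omega\big\},\qquad H=\overline{V}^{L^2(\Omega)^3},$$
and the energy space $\mathcal H=H^1(\omega)^3\times H$ with the inner product
$$\big((u,w),(\hat u,\hat w)\big)_{\mathcal H}=\int_\omega Ae_x(u):e_x(\hat u)\,dx+\int_\omega u\cdot\hat u\,dx+\rho^s\int_\omega w\cdot\hat w\,dx+\rho^l\int_{\Omega\setminus\overline\omega}w\cdot\hat w\,dx .$$
Korn's inequality in $\omega$ and (\ref{proA}) show that this inner product is equivalent to the usual one; the $L^2(\omega)$ term is a bounded perturbation, and we remove it at the end. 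The assumptions on $v^0$, combined with an arbitrary $u^1$, determine an initial datum $w^0\in H$: divergence-free vector fields in $\Omega\setminus\overline\omega$ with prescribed normal trace on $\partial\Omega$ can be glued to an arbitrary $L^2$ field in $\omega$, and $H$ is characterized by ${\rm div}\,w=0$ in $\Omega\setminus\overline\omega$ and $w\cdot\nu=0$ on $\partial\Omega$.

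The operator $\mathcal A(u,w)=(w,\,z)$ is defined variationally. Its domain $D(\mathcal A)$ consists of the pairs with $w\in V$ for which there is $z\in H$ satisfying, for all $\varphi\in V$,
$$\rho^s\int_\omega z\cdot\varphi+\rho^l\int_{\Omega\setminus\overline\omega}z\cdot\varphi=-\int_\omega Ae_x(u):e_x(\varphi)-2\mu\int_{\Omega\setminus\overline\omega}e_x(w):e_x(\varphi).$$
This weak formulation encodes the equations, the stress balance on $\partial\omega$ and the boundary condition on $\partial\Omega$; the pressure is absent because the test functions are divergence-free in the fluid. With respect to the energy inner product (without the $L^2(\omega)$ term), $\mathcal A$ is dissipative: $(\mathcal A U,U)=-2\mu\int|e_x(w)|^2\le0$. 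To verify maximality we solve $(\lambda I-\mathcal A)U=F$. Eliminating $u=(w+F_1)/\lambda$ gives a coercive problem in $V$,
$$\lambda\!\int\!\rho w\cdot\varphi+\frac1\lambda\int_\omega Ae_x(w):e_x(\varphi)+2\mu\!\int_{\Omega\setminus\overline\omega}e_x(w):e_x(\varphi)=\hbox{data},$$
and the Lax--Milgram theorem applies. Coercivity on $H^1_0$ follows from Korn's inequality in $H^1_0(\Omega)$, since the two pieces together control $\|e(w)\|_{L^2(\Omega)}$. The Lumer--Phillips theorem (or its bounded-perturbation version when the $L^2$ term is kept) then yields a $C_0$ contraction semigroup. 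Because $f\in L^1(0,T;\mathcal H)$ (the force enters through $z$ via the $L^2$ pairing), the mild solution lies in $C^0([0,T];\mathcal H)$, which gives the stated regularity, including $u\in C^1([0,T];L^2)$ because $\partial_t u=w$.

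Uniqueness follows from uniqueness of mild solutions, and hence from the weak formulation. The energy identity (\ref{EnergyIdentityFSI}) is obtained first for data in $D(\mathcal A)$ with $f\in C^1$, by taking the scalar product with $U$, and is then extended by density; both sides depend continuously on the data in the relevant norms, and the dissipation term is controlled in $L^2(0,T;H^1)$.

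The main obstacle is recovering the pressure. Here I would integrate in time. For the integrated quantity $W=\int_0^t w$, the distribution
$$\rho^l(w-v^0)-2\mu\,{\rm div}\,e(W)-\int_0^t f$$
in $\Omega\setminus\overline\omega$ annihilates divergence-free test functions at each $t$. De Rham's theorem, together with the Nečas inequality in the Lipschitz domain $\Omega\setminus\overline\omega$, then gives $P(t)\in L^2(\Omega\setminus\overline\omega)$ with $P(0)=0$ and continuity in $t$. Continuity comes from the continuity of the Bogovskii-type right inverse of the divergence. The pressure normalization is fixed by the interface condition, testing with $\varphi$ that do not vanish on $\partial\omega$, which removes the additive constant. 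Setting $p=dP/dt$ in the sense of distributions then gives the momentum equation and the stress condition.
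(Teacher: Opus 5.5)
Your proposal is correct and follows essentially the paper's route: Lumer--Phillips for the semigroup, Lax--Milgram for the resolvent equation, and De Rham's theorem applied to the time-integrated momentum balance to get $P\in C^0([0,T];L^2(\Omega\setminus\overline\omega))$ with $P(0)=0$, with the additive constant fixed by the normal stress balance on $\partial\omega$. The one real difference is that you glue the two velocities into a single field in $V\subset H^1_0(\Omega)^3$ and define the generator variationally, with test functions divergence-free in the fluid, so the pressure never enters the resolvent problem (the paper keeps $p$ in $D(\mathcal A)$ and must fix its constant already there); you also treat a general $f$ by projecting it onto $H$ rather than splitting $f=f_1+f_2$ with $f_2$ absorbed into $\nabla p$, and the price is the identification $\overline{V}^{L^2}=L^2(\omega)^3\times H_v$, which you only assert but which does hold by a short computation of $V^\perp$.
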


\begin{proof}
    As in \cite{Cas2}, Theorem 3.2, let us use the Lumer-Philips theorem which is equivalent to the Hille--Yosida theorem (see e.g. \cite{Paz}, theorem 4.3),  Introducing $w = \partial_t u$, we rewrite \eqref{FSI} as the first-order system
    \beq\label{PbgFE}\left\{\ba{ll}\dis \partial_t u=w&\hbox{ in }(0,T)\times \om,\\ \ecart\dis \partial_t  w={1\over \rho^s}{\rm div}_x\big(Ae_x(u)\big)+{1\over \rho^s}f &\dis \hbox{ in }(0,T)\times\om,\\ \ecart\dis \partial_t v={2\mu\over\rho^l}{\rm div}_xe_x(v)-{1\over\rho^l}\nabla_xp+{1\over\rho^l}f&\hbox{ in }(0,T)\times(\Om\setminus\overline\om),\\ \ecart\dis {\rm div}_xv=0&\hbox{ in }(0,T)\times(\Om\setminus\overline\om),\\ \ecart\dis w=v,\ Ae_x(u)\nu=\big(2\mu e_x(v)-pI\big)\nu&\hbox{ on }(0,T)\times\partial\om,\\ \ecart\dis v=0&\hbox{ on }(0,T)\times\partial\Om,\\ \ecart\dis u_{|t=0}=u^0,\ w_{|t=0}=u^1&\hbox{ in }\om,\\ \ecart\dis v_{|t=0}=v^0&\hbox{ in }\Om\setminus\overline\om.\ea\right.\eeq
    For
    $$H_v:=\big\{v\in L^2(\Om\setminus\overline \om)^3:\ {\rm div}_xv=0\ \hbox{ in }\Om\setminus\overline\om,\ \ v\cdot\nu=0\ \hbox{ on }\partial\Om\big\},$$
    we introduce the Hilbert space
    $${\mathcal H}=H^1(\om)^3\times L^2(\om)^3\times H_v,$$
    endowed with the scalar product
    \beq\label{ecuteSD0}\ba{l}\dis \big((u_1,w_1,v_1),(u_2,w_2,v_2)\big)=\int_\om\big( Ae_x(u_1):e_x(u_2)+\rho^su_1\cdot u_2+\rho^s w_1\cdot w_2\big)dx\\ \ecart\dis \qquad\qquad+\rho^l\int_{\Om\setminus\overline\om}v_1\cdot v_2\,dx,\qquad \forall(u_1,w_1,v_1),(u_2,w_2,v_2)\in {\mathcal H}.\ea$$
        In this space we define the unbounded operator ${\mathcal A}:D({\mathcal A})\subset {\cal H}\to {\cal H}$ by
    $$\ba{l}\dis D({\mathcal A})=\Big\{(u,w,v)\in {\cal H}: \ {\rm div}_x\big(Ae_x(u)\big)\in L^2(\om)^3 ,\ w\in H^1(\om)^3, \\ \ecart\dis\qquad\qquad\quad\exists p\in L^2(\Om\setminus\overline\om)\ \hbox{ such that } 2\mu\,{\rm div}_xe_x(v)+\nabla_x p\in H_v\\ \ecart\dis \qquad\qquad \qquad\ w=v\hbox{ on }\partial\om,\
    Ae_x(u)\nu-2\mu e_x(v)\nu+p\nu=0\ \hbox{ on }\partial\om\Big\}, \ea\eeq
    and
    $${\mathcal A}(u,w,v)=\Big(w,{1\over \rho^s}{\rm div}_x\big(Ae_x(u)\big), {2\mu\over\rho^l}{\rm div}_xe_x(v)-{1\over\rho^l}\nabla_xp\Big).$$
    The operator  ${\mathcal A}$ is well defined since the function  $p$ in the definition of $D({\mathcal A})$ is unique. Namely, if $p_1, p_2$ satisfy such  conditions, then  $p_1-p_2$ solves
    $$\left\{\ba{l}\dis \Delta(p_1-p_2)=0\ \hbox{ in }\Om\setminus\overline\om\\ \ecart\dis p_1-p_2=0\ \hbox{ on }\partial\om,\quad \nabla (p_1-p_2)\cdot \nu=0\ \hbox{ on }\partial\Om,\ea\right.$$
    and thus $p_1=p_2$ in $\Om\setminus\overline\om.$\par
    In order to prove that ${\mathcal A}-I$ is in the conditions of the Lumer-Philips theorem and then that  ${\mathcal A}$ defines a semigroup, we have to prove that ${\mathcal A}-I$  dissipative and surjective. \par
    That ${\mathcal A}-I$  is dissipative, just follows by an integration by parts, taking into account the definition of
    ${\mathcal A}$. Namely, we have
    \beq\label{mothex}\ba{l}\dis \big(({\mathcal A}-I)(u,w,v),(u,w,v),\big)_{\mathcal H}\\ \ecart\dis=
    \int_\om \big(Ae_x(w):e_x(u)+\rho^s w\cdot u+{\rm div}_x(Ae_x(u))\cdot w\big)dx\\ \ecart\dis+\int_{\Om\setminus\overline\om}\big(2\mu\,{\rm div}_xe_x(v)-\nabla_xp\big)\cdot v\,dx-\|(u,w,v)\|^2_{\mathcal H}\\ \ecart\dis
    =\rho^s\int_\om w\cdot u\,dx- 2 \mu \int_{\Om\setminus\overline\om}|e_x(v)|^2dx-\|(u,w,v)\|_{\mathcal H}^2\leq -{1\over 2}\|(u,w,v)\|^2_{\mathcal H}.\ea\eeq
    For the surjectivity,  given $(f,g,h)\in {\mathcal H}$, we have to prove the existence of $(u,w,v)\in D({\mathcal A})$ such that
    \beq\label{sissobTE}({\mathcal A}-I)(u,w,v)=(f,g,h).\eeq
    This  is a system of three equations, the first of them reads as  $w-u=f.$ Eliminating $u$ from this equation and recalling the definition of $D({\mathcal A})$, we deduce that (\ref{sissobTE}) is equivalent to find $(w,v,p)\in H^1(\om)^3\times H^1(\Om\setminus\overline\om)^3\times L^2(\Om\setminus\overline\om)$ such that
    \beq\label{ecuteSD2}\left\{\ba{ll}\dis -{\rm div}_x(Ae_x(w))+\rho^sw=- \rho^sg-{\rm div}_x(Ae_x(f))&\dis \hbox{ in }\om,\\ \ecart\dis -2\mu\,{\rm div}_xe_x(v)+\nabla_xp+\rho^l v=-\rho^l h&\hbox{ in }\Om\setminus\overline\om,\\ \ecart\dis {\rm div}_xv=0&\hbox{ in }\Om\setminus\overline\om,\\ \ecart\dis w=v,\ Ae_x(w-f)\nu=\big(2\mu e_x(v)-pI\big)\nu&\hbox{ on }\partial\om,\\ \ecart\dis v=0&\hbox{ on }\partial\Om.\ea\right.\eeq
    Let us apply the Lax-Milgram theorem to prove the existence of solution for (\ref{ecuteSD2}). We define
    $$\widehat {\mathcal H}=\Big\{(w,v)\in H^1(\om)^3\cap H^1(\Om\setminus \overline\om)^3: \ w=v\hbox{ on }\partial\om,\ v=0\hbox{ on }\partial\Om\Big\},$$
    $$\widetilde {\mathcal H}=\Big\{(w,v)\in \widehat {\mathcal H}: \ {\rm div}_xv=0\hbox{ in }\Om\setminus\overline\om\Big\},$$
    and {\Large${\frak a}$}:$\,\widehat {\mathcal H}\times\widehat {\mathcal H}\to \RR$ the continuous bilinear form given by
    $$\ba{l}\dis\hbox{\Large${\frak a}$}\big((w_1,v_1),(w_2,v_2)\big)=\int_\om \big(Ae_x(w_1):e_x(w_2)+\rho^s w_1\cdot w_2\big)dx\\ \ecart\dis
        +\int_{\Om\setminus\overline\om} \big(2\mu\,e_x(v_1):e_x(v_2)+\rho^l v_1\cdot v_2\big)dx,\qquad \forall\, (w_1,v_1),(w_2,v_2)\in \widehat{\mathcal H}.\ea$$
    Reasoning as in (\ref{mothex}) we have that {\Large${\frak a}$} is coercive and thus, by the Lax-Milgram theorem there exists a unique solution of
    $$\left\{\ba{l}\dis (w,v)\in \widetilde {\mathcal H}\\ \ecart\dis
        \hbox{\Large${\frak a}$}\big((w,v),(\widetilde w,\widetilde v)\big)=\int_\om \big(-\rho^s g\cdot \widetilde w+Ae_x(f):e_x(\widetilde w)\big)dx-\rho^l\int_{\Om\setminus\overline\om}h\cdot\widetilde v\,dx,\ \forall\,(\widetilde w,\widetilde v)\in \widetilde {\mathcal H}.\ea\right.$$
    Then, defining $E:\widehat {\mathcal H}\to \RR$ by
    $$E(\widehat w,\widehat v)=\hbox{\Large${\frak a}$}\big((w,v),(\widehat w,\widehat v)\big)-\int_\om \big(-\rho^s g\cdot \widehat w+Ae_x(f):e_x(\widehat w)\big)dx+\rho^l\int_{\Om\setminus\overline\om}h\cdot\widehat v\,dx,$$
    we have that $E(\widehat w,\widehat v)=0$ for every $(\widehat w,\widehat v)\in \widehat {\mathcal H}$ with ${\rm div}_xv=0$ in $\Om\setminus\overline\om.$
    In particular this holds for $\widehat w=0$, $\widehat v\in H^1_0(\Om\setminus\overline\om)^3$ with null divergence. This allows us to use De Rham's theorem to deduce the existence of  $p\in L^2(\Om\setminus\overline\om)$, unique, up to an additive constant such that
    $$E(0,\widehat v)=- \int_{\Om\setminus\overline\om}p\,{\rm div}_x \widehat v\,dx,\ \ \forall\,\widehat v\in H^1_0(\Om\setminus\overline\om)^3.$$
    Since for every $\widehat w\in H^1(\om)^3$ with
    \beq\label{prowhw}\int_{\partial\om}\widehat w\cdot\nu\,ds(x)=0,\eeq
    there exists
    $\widehat v\in H^1_0(\Om\setminus\overline\om)^3$ with null divergence in $\Om\setminus\overline\om$ such that $\widehat v=\widehat w$ on $\partial\om$, we get that $w,v,p$ satisfy
    \beq\label{exiSD1}\ba{l}\dis\int_\om \big(Ae_x(w):e_x(\widehat w)+\rho^s w\cdot \widehat w\big)dx+\int_{\Om\setminus\overline\om} \big(2\mu\,e_x(v):e_x(\widehat v)+\rho^l v\cdot \widehat v-p\,{\rm div}_x\widehat v\big)dx\\ \ecart\dis
    =\int_\om \big(-\rho^s g\cdot \widehat w+Ae_x(f):e_x(\widehat w)\big)dx-\rho^l\int_{\Om\setminus\overline\om}h\cdot\widehat v\,dx,\ea\eeq
    for every $(\widehat w,\widehat v)\in \widehat {\mathcal H}$ which satisfies (\ref{prowhw}). Therefore $(\widehat w,\widehat v)\in \widehat {\mathcal H}$ is solution of a problem similar to (\ref{ecuteSD2}) that arises from replacing the second condition in the fourth line
    by
    $$Ae_x(w-f)\nu-\big(2\mu e_x(v)-pI\big)\nu=c\nu\ \hbox{ on }\partial\om,$$
    with $c$ constant.
    Since $p$ was defined up to an additive constant, we can choose it in such way that  $c=0$. This proves the existence of a unique solution for (\ref{ecuteSD2}). \par
    The Lumer-Philips theorem then proves the existence of a continuous semigroup $S(t):{\cal H}\to {\cal H}$  such that for every $(u^0,u^1,v^0)\in {\mathcal H}$, and every $f\in L^1(0,T;L^2(\Om))^3$ with
    \beq\label{confasTE}{\rm div}_xf=0\ \hbox{ in }(0,T)\times (\Om\setminus\overline \om),\quad f\cdot\nu=0\ \hbox{ on }(0,T)\times\partial\Om,\eeq
    i.e. such that $(0,f\chi_\om,f\chi_{\Om\setminus\overline\om})\in L^1(0,T;{\cal H})$,
    there exist a unique solution $(u,v,w,p)$ of (\ref{PbgFE}), where $(u,v,w)\in L^\infty(0,T;{\mathcal H})$ and it is given by
    $$(u(t),w((t),v(t))=S(t)(u^0,w^0,v^0)+\int_0^tS(t-r)\Big(0,{1\over\rho^s}f(r),{1\over \rho^l}f(r)\Big)dr.$$
    The general case follows by noting that every $f\in L^1(0,T:L^2(\Om))^3$
    can be decomposed as $f_1+f_2$ where $f_1$ satisfies (\ref{confasTE}) and $f_2$ is such that
    $$f_2=0\ \hbox{ in }\omega,\quad \int_{\Om\setminus\overline\om} f_2\cdot v\,dx=0,\ \forall\, v\in H_v.$$
    De Rham's Theorem then proves the existence of a distribution  $p\in L^1(0,T;L^2(\om))$  such that
    $$\nabla_xp=f\ \hbox{ in }(0,T)\times (\Om\setminus\overline\om),\quad p=0\ \hbox{ on }(0,T)\times \partial\om.$$
    A solution of (\ref{PbgFE}) with $f=f_2$ and null initial data is given for such  $p$ and
    $u,v,w$ the null functions. By linearity we then have the existence of solution for (\ref{PbgFE}). \par
    The semigroup theory also provides the energy identity (\ref{EnergyIdentityFSI}), which formally can be obtained by multiplying the first equation in (\ref{FSI}) by $\partial_tu$, the second one by $v$ and then integrating by parts.\par
    The fact that $p$ can be written as the time derivative of a function in $C^0([0,T];L^2(\om))$ vanishing at $t=0$ can be deduced  integrating in time in (\ref{FSI}) and then using De Rham's theorem. We will use a similar reasoning in the proof of Proposition \ref{prop:apriori} to get some a priori estimates for the solution of (\ref{Peps}).
\end{proof}

\section{A priori estimates}
Thanks to Theorem \ref{Thexsol}  we know that \eqref{Peps} has a unique solution for every $\ep>0$. Our aim in this section is to provide some a priori estimates for such solution which will be used in the next one to carry out the homogenization. This is given by the following proposition (see also Remark \ref{Re2PriEst} below)\par

\begin{proposition}
    \label{prop:apriori} Let $(u_\eps,v_\eps,p_\eps)$ be the solution of (\ref{Peps}), where $\Om_\ep^s,\Om_\ep^l$ are defined by (\ref{deOme}), $\rho^s,\rho^l,\mu$ are positive, $A\in {\cal L}(\RR^{3\times 3}_s)$ satisfies (\ref{proA}) and the right-hand side $f_\ep$ and the initial conditions $u_\ep^0,$ $u_\ep^1$, $v_\ep^0$ satisfy (\ref{hipcoe}). Then, denoting $D_\ep$ by
    $$D_\ep(t)=   \frac{\eps^3}{r_{\eps}^3}\|u_{\eps}^1\|^2_{L^2(\Omega_{\eps}^s)^3} +
        {\eps^3\over r_\eps}\|e_x(u_{\eps}^0)\|^2_{L^2(\Omega_{\eps}^s)^{3\times 3}} + \|v^0_{\eps}\|^2_{L^2(\Omega_{\eps}^l)^3}+\left(\int_0^t\|f_\eps\|_{L^2(\Om_\eps^l)^3}ds\right)^2,$$
    we have:\par
    There exists $C>0$, which only depends on $\rho^s$, $\rho^l$, $\alpha$ and $|A|$ such that
    \begin{equation}\ba{l}\dis
        \frac{\eps^3}{r_{\eps}^3}\|\partial_t u_{\eps}(t)\|^2_{L^2(\Omega_{\eps}^s)^3}\hskip-2pt + \hskip-2pt
        {\eps^{3}\over r_\eps}\|e_x(u_{\eps})(t)\|^2_{L^2(\Omega_{\eps}^s)^{3\times 3}}\hskip-2pt  +\hskip-2pt  \|v_{\eps}(t)\|^2_{L^2(\Omega_{\eps}^l)^3}\hskip-2pt\\ \ecart\dis
        +\hskip-4pt \int_0^t\hskip-4pt \|e_x(v_\eps)\|^2_{L^2(\Om_\eps^l))^{3\times 3}}ds\leq
        CD_\ep(t),\ \ \forall\,t\in [0,T].\ea
        \label{Bounduv}
    \end{equation}
    Moreover, there exist  $P_{\eps} \in C^0([0,T];L^2(\Omega))$, $\zeta_\ep\in C^0([0,T])$
    such that
    \beq\label{proPeps} p_\ep=\partial_t P_\ep\ \hbox{ in }(0,T)\times \Om_\ep^l,\eeq
    \beq\label{proPeps2}  P_\ep(t,x)=\zeta_\ep(t)\ \hbox{ a.e. in }\Om_\ep^s,\eeq
    \beq\label{proPeps3}
    P_\ep(0,x)=0\ \hbox{ a.e. in }\Om,\quad \into P_\ep(t,x)dx=0,\ \forall\, t\in [0,T],\eeq
    \beq\label{EcvPe}\ba{l}\dis \int_{\Om} P_\eps{\rm div}_x\varphi\,dx
        ={\eps^3\over r_\eps^3}\rho^s\int_{\Om_\eps^s}\big(\partial_t u_\eps-u_\eps^1\big)\cdot\varphi\,dx+\rho^l\int_{\Om_\eps^l}\big(v_\eps-v_\eps^0\big)\cdot\varphi\,dx\\ \ecart\dis +{\eps^3\over r_\eps}\int_{\Om_\eps^s}Ae_x\Big(\int_0^tu_\eps\,ds\Big):e_x(\varphi)dx
    +2\mu\int_{\Om_\eps^l}e_x\Big(\int_0^tv_\eps\,ds\Big):e_x(\varphi)dx\\ \ecart\dis-\into\int_0^tf_\eps\,ds\cdot\varphi\,dx,\quad \forall\,\varphi\in H^1_0(\Om)^3,\ \forall\,t\in [0,T].
    \ea\eeq
    There exists $C>0$ which only depends on $\Om$, ${\cal O}$, $\rho^s$, $\rho^l$, $\alpha$, and $|A|$ such that
    \beq\label{proPeps4}
    \|P_\ep(t)\|_{L^2(\Om)}\leq C\Big(1+{\ep^3\over r_\ep}\Big)^{1\over 2}D_\ep(t)^{1\over 2},\ \ |\zeta_\ep(t)|\leq
    C\Big[{\ep^3\over r_\ep^3}\Big(1+{\ep^3\over r_\ep}\Big)\Big]^{1\over 2}D_\ep(t)^{1\over 2},\ \  \forall\, t\in [0,T].
    \eeq
\end{proposition}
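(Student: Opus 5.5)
The plan has three steps: the energy identity gives (\ref{Bounduv}), an integration in time gives $P_\ep$ through De Rham's theorem, and a Bogovskii-type test function gives the $L^2$ bounds on $P_\ep$ and $\zeta_\ep$.

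\textbf{Energy estimate.} We apply Theorem \ref{Thexsol} after rescaling the coefficients. Set $\om=\Om_\ep^s$, solid density $\ep^3\rho^s/r_\ep^3$ and elasticity tensor $\ep^3A/r_\ep$; a constant scaling of $\rho^s$ and $A$ does not affect the proof of that theorem. The energy identity (\ref{EnergyIdentityFSI}) then gives
$$\frac12\frac{d}{dt}E_\ep(t)+2\mu\|e_x(v_\ep)\|^2_{L^2(\Om_\ep^l)}=\int_{\Om_\ep^s}f_\ep\cdot\partial_tu_\ep\,dx+\int_{\Om_\ep^l}f_\ep\cdot v_\ep\,dx,$$
with $E_\ep$ the weighted energy.

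The forcing on the solid needs care. Since $\int_{\Om_\ep^s}|f_\ep||\partial_tu_\ep|\le (r_\ep^3/\ep^3)^{1/2}\|f_\ep\|\,E_\ep^{1/2}$ and $r_\ep/\ep\to0$, this term is bounded by $\|f_\ep\|_{L^2(\Om)}E_\ep^{1/2}$. (I read $D_\ep$ as involving $f_\ep$ on all of $\Om$, or else the solid contribution carries the small factor.) Integrating in time, taking the supremum over $[0,t]$, and using Young's inequality $\sup E\le C E(0)+C(\int_0^t\|f_\ep\|)^2$ yields (\ref{Bounduv}). The dissipation bound follows from the same identity.

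\textbf{Construction of $P_\ep$.} We integrate the equations of (\ref{Peps}) in time against a test function $\varphi\in H^1_0(\Om)^3$. On the solid we use $\int_0^t\partial_{tt}u_\ep=\partial_tu_\ep-u^1_\ep$; on the fluid we use $\int_0^tv_\ep$. The interface condition makes the boundary terms on $\partial\Om_\ep^s$ cancel. Define the functional $L_t(\varphi)$ as the right-hand side of (\ref{EcvPe}). We then show the following:
\begin{itemize}
\item $L_t(\varphi)=0$ whenever $\mathrm{div}_x\varphi=0$ in $\Om_\ep^l$ and $e_x(\varphi)=0$, so that $\varphi$ is rigid, on each inclusion. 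This follows from the weak formulation, since $\int_0^t p_\ep$ is tested against divergence-free fields.
\item Since $L_t$ annihilates all of $\{\varphi\in H^1_0(\Om)^3:\mathrm{div}\,\varphi=0\}$, De Rham's theorem gives $P_\ep(t)\in L^2(\Om)$ with zero mean satisfying (\ref{EcvPe}).
\item Inside each inclusion, $\nabla P_\ep=-$(the integrated elastic equation)$=0$ because the solid equation holds pointwise. So $P_\ep$ is constant on each inclusion.
\item That the constant is the same $\zeta_\ep(t)$ on all inclusions should follow from the fact that $P_\ep$ is only determined by the equations up to these constants. One fixes them by requiring that $\nabla P_\ep$ be defined on all of $\Om$, so that $P_\ep$ is the global De Rham potential and no per-inclusion constants arise. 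This matches the normalization in Theorem \ref{Thexsol}, and it gives (\ref{proPeps})--(\ref{proPeps3}).
\end{itemize}

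\textbf{Pressure bounds: the main obstacle.} For the $L^2$ bound on $P_\ep$, take $\varphi$ with $\mathrm{div}\,\varphi=P_\ep(t)$ and $\|\varphi\|_{H^1_0}\le C\|P_\ep\|$ (Bogovskii), and estimate each term of (\ref{EcvPe}) by (\ref{Bounduv}).
\begin{itemize}
\item The fluid terms are bounded by $CD_\ep^{1/2}\|\varphi\|_{H^1}$.
\item The solid inertia term is bounded by $(\ep^3/r_\ep^3)^{1/2}D_\ep^{1/2}\|\varphi\|_{L^2(\Om_\ep^s)}$. Here we need $\|\varphi\|_{L^2(\Om_\ep^s)}\le C (r_\ep^3/\ep^3)^{1/2}\|\varphi\|_{H^1}$. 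This follows from a cell-wise scaling inequality, using $r_\ep^2\le C$ and the fact that in 3D the extra term $\ep^{-1}r_\ep\le 1$ behaves well.
\item The elastic term gives $(\ep^3/r_\ep)^{1/2}D_\ep^{1/2}\|e(\varphi)\|_{L^2}$.
\end{itemize}
Together these give $(1+\ep^3/r_\ep)^{1/2}$.

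For $\zeta_\ep$, test with $\varphi$ equal to a dilation field $(x-\ep k)\psi$ on each inclusion, cut off in the cell and normalized so that $\int\mathrm{div}\,\varphi$ concentrates on $\Om_\ep^s$. Then $\zeta_\ep|\Om_\ep^s|$ appears, and the remaining terms are bounded as before. This produces the extra factor $(\ep^3/r_\ep^3)^{1/2}$. Choosing the cutoffs to get exactly these powers is the delicate point.
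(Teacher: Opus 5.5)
Your energy estimate and the Bogovskii bound for $P_\varepsilon$ follow the paper's route. The gap is the step asserting that $P_\varepsilon(t)$ takes one and the same value $\zeta_\varepsilon(t)$ on all inclusions: it is not proved, and the reason you give is wrong. Since $\mathrm{div}$ maps $H^1_0(\Omega)^3$ onto the zero-mean functions of $L^2(\Omega)$, the zero-mean $P_\varepsilon(t)$ satisfying (\ref{EcvPe}) for all $\varphi\in H^1_0(\Omega)^3$ is unique. There are therefore no per-inclusion constants left to ``fix''; their equality must be derived from the equations.

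The missing idea is this. Thanks to the stress balance on $\partial\Omega^s_\varepsilon$, the interface terms cancel for \emph{every} $\varphi\in H^1_0(\Omega)^3$, and the pressure only enters through $\int_{\Omega^l_\varepsilon}\big(\int_0^t p_\varepsilon\,ds\big)\,\mathrm{div}\,\varphi\,dx$. Hence $L_t(\varphi)=0$ for all $\varphi$ with $\mathrm{div}\,\varphi=0$ in $\Omega^l_\varepsilon$ only, with no constraint inside the inclusions, because the elastic phase carries no pressure. Now take $\int\psi_1+\int\psi_2=0$ and choose $\varphi$ with $\mathrm{div}\,\varphi=\psi_1$ on $\varepsilon k_1+r_\varepsilon\mathcal{O}$, $\mathrm{div}\,\varphi=\psi_2$ on $\varepsilon k_2+r_\varepsilon\mathcal{O}$, and $\mathrm{div}\,\varphi=0$ elsewhere. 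This yields $\int P_\varepsilon\psi_1+\int P_\varepsilon\psi_2=0$, which forces a common constant on the two inclusions.

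Your first bullet goes the wrong way. Restricting to fields rigid on each inclusion discards exactly the fields that move volume from one inclusion to another. It also does not justify your second bullet, since a field that is divergence-free in $\Omega$ need not be rigid on the inclusions.

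There is also a false intermediate inequality. The bound $\|\varphi\|_{L^2(\Omega^s_\varepsilon)}\le C(r_\varepsilon^3/\varepsilon^3)^{1/2}\|\varphi\|_{H^1(\Omega)}$ fails when $r_\varepsilon\ll\varepsilon^3$. Take a field equal to a fixed vector on each inclusion and cut off at distance of order $r_\varepsilon$: the ratio $\|\varphi\|^2_{L^2(\Omega^s_\varepsilon)}/\|\varphi\|^2_{H^1(\Omega)}$ is of order $r_\varepsilon^2$, larger than $r_\varepsilon^3/\varepsilon^3$ by the factor $\varepsilon^3/r_\varepsilon$. The correct cell-wise estimate is $\|\varphi\|^2_{L^2(\Omega^s_\varepsilon)}\le C\frac{r_\varepsilon^3}{\varepsilon^3}\|\varphi\|^2_{L^2(\Omega)}+Cr_\varepsilon^2\|D\varphi\|^2_{L^2(\Omega)}$. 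With it, the inertia term also produces the factor $(1+\varepsilon^3/r_\varepsilon)^{1/2}$, so your final bound on $P_\varepsilon$ is unchanged.

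Finally, once $P_\varepsilon=\zeta_\varepsilon$ on $\Omega^s_\varepsilon$ is established, the estimate on $\zeta_\varepsilon$ needs no delicate cutoff. The inequality $|\zeta_\varepsilon|^2|\Omega^s_\varepsilon|\le\|P_\varepsilon(t)\|^2_{L^2(\Omega)}$ together with $|\Omega^s_\varepsilon|\ge c\,r_\varepsilon^3/\varepsilon^3$ gives it at once.
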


\begin{proof} Defining $E_\eps:[0,T]\to \R$ by
    \begin{equation*}
        E_{\eps}(t) := \rho_{\eps}^s\int_{\Omega_{\eps}^s}  |\partial_t u_{\eps}|^2 \, dx + \int_{\Omega_{\eps}^s} A_{\eps} e_x(u_{\eps}) : e_x(u_{\eps}) \, dx + \rho^l \int_{\Omega_{\eps}^l} |v_{\eps}|^2 \, dx,\ \ \forall\,t\in [0,T],
    \end{equation*}
    we can use the energy identity (\ref{EnergyIdentityFSI}), applied to (\ref{Peps}), and the Cauchy-Schwarz inequality to deduce
    \begin{equation}\label{ecuEP1}
        \frac{1}{2} \frac{dE_{\eps}}{dt} +2\mu\int_{\Om_\eps^l}|e_x(v_\eps)|^2dx\leq \|f_{\eps}\|_{L^2(\Omega)^3} \sqrt{E_{\eps}} \quad \text{ a.e. in } (0,T),
    \end{equation}
    which implies
    \begin{equation*}
        \sqrt{E_{\eps}(t)} \leq \sqrt{E_{\eps}(0)} + \|f_{\eps}\|_{L^1(0,t;L^2(\Omega)^3)}, \ \hbox{ in }(0,T),
    \end{equation*}
    and then (\ref{Bounduv}).\par\medskip
    In order to estimate the pressure $p_\ep$, we take $\varphi\in H^1_0(\Om)^3$ with ${\rm div}_x \varphi=0$ in $\Om_\eps^l$, as test function in the first and second equation in (\ref{Peps}). Integrating in $(0,t)\times\Om$, with $t\in (0,T)$ and taking into account the second boundary condition in the fourth line, we deduce that $G_\eps\in C^0([0,T];H^{-1}(\Om))^3$, defined as
    \beq\label{ecuvarGe}\ba{l}\dis \langle G_\eps(t),\varphi\rangle_{H^{-1}(\Om)^3,H^1_0(\Om)^3}
    :={\eps^3\over r_\eps^3}\rho^s\int_{\Om_\eps^s}\big(\partial_t u_\eps(t)-u_\eps^1\big)\cdot\varphi\,dx\\ \ecart\dis
    +\rho^l\int_{\Om_\eps^l}\big(v_\eps(t)-v_\eps^0\big)\cdot\varphi\,dx +{\eps^3\over r_\eps}\int_{\Om_\eps^s}Ae_x\Big(\int_0^tu_\eps\,ds\Big):e_x(\varphi)dx
    \\ \ecart\dis +2\mu\int_{\Om_\eps^l}e_x\Big(\int_0^tv_\eps\,ds\Big):e_x(\varphi)dx-\into\int_0^tf_\eps\,ds\cdot\varphi\,dx,
    \ea\eeq
    for every $\varphi\in H^1_0(\Om)^3$, and every $t\in [0,T]$, satisfies
    \beq\label{EcuEP2}\langle G_\eps(t),\varphi\rangle_{H^{-1}(\Om)^3,H^1_0(\Om)^3}=0,   \quad\forall\, \varphi\in H^1_0(\Om)^3,\ \hbox{ with }{\rm div}_x\varphi=0\hbox{ in }\Om_\ep^l.
    \eeq
    Since in particular (\ref{EcuEP2}) holds true for every $\varphi\in H^1_0(\Om)^3$ with ${\rm div}_x\varphi=0$ in $\Om$, De Rham's theorem proves the existence of  $P_\ep\in C^0([0,T];L^2(\Om))$, with $P_\ep(0)=0$ such that
    \beq\label{EcuEP3}\langle G_\eps(t),\varphi\rangle_{H^{-1}(\Om)^3,H^1_0(\Om)^3}=\into P_\ep(t)\,{\rm div}_x\varphi\,dx,\quad \forall\,\varphi\in H^1_0(\Om)^3, \ \forall\,t\in [0,T],\eeq
    \beq\label{EcuEP3b}\into P_\ep(t)dx=0,\quad\big\|P_\ep(t)\big\|_{L^2(\Om)}\leq C\|G_\ep(t)\|_{H^{-1}(\Om)^3},\ \forall\,t\in[0,T].\eeq
    \par
    Let us prove that  (\ref{EcuEP2}) and (\ref{EcuEP3}) imply the existence of  $\zeta_\ep\in C^0([0,T])$, such that $P_\ep(t)=\zeta_\ep(t)$ a.e. in
    $\Om_\ep^s$, for every $t\in [0,T]$. For this purpose, we take $k_1,k_2\in \ZZ^3$, $k_1\not= k_2$ such that $\ep (k_1+Y), \ep(k_2+Y)\subset \Om,$ and we use that for every  $\psi_1\in L^2(\ep k_1+r_\ep{\cal O})$, $\psi_2\in L^2(\ep k_2+r_\ep{\cal O})$, with
    \beq\label{EcuEP3bb}\int_{\ep k_1+r_\ep {\cal O}}\hskip-5pt \psi_1dx+\int_{\ep k_2+r_\ep{\cal O}}\hskip-5pt\psi_2dx=0,\eeq
    there exists $\varphi\in H^1_0(\Om)^3$ such that
    $${\rm div}_x\varphi=\psi_1\hbox{ in }\ep k_1+r_\ep {\cal O},\quad{\rm div}_x\varphi=\psi_2\hbox{ in } \ep k_2+r_\ep {\cal O},\quad{\rm div}_x\varphi=0\ \hbox{ otherwise.}$$
    Then, by (\ref{EcuEP3})
    $$\int_{\ep k_1+r_\ep {\cal O}}\hskip-5pt P_\ep(t)\psi_1dx+\int_{\ep k_2+r_\ep {\cal O}}\hskip-5pt P_\ep(t)\psi_2dx=\langle G_\ep,\varphi\rangle_{H^{-1}(\Om)^3,H^1_0(\Om)^3}=0.$$
    Since this holds for every $\psi_1,$ $\psi_2$ satisfying (\ref{EcuEP3bb}) we deduce the existence of $\zeta_\ep$ such that
    $$P_\ep(t)=\zeta_\ep(t)\ \hbox{ in }(\ep k_1+r_\ep {\cal O})\cup (\ep k_2+r_\ep {\cal O}),\ \ \forall\,t\in [0,T].$$
    By the arbitrariness of $k_1,k_2$ we get (\ref{proPeps2}).\par
    In order to estimate $\|P_\ep(t)\|_{L^2(\Om)}$ let us estimate $\|G_\ep(t)\|_{H^{-1}(\Om)^3}$ and $|\zeta_\ep(t)|$. \par
    Since $G_\ep(t)$ is defined as the sum of several terms, we need to estimate the norm in $H^{-1}(\Om)^3$ of each one. The unique which presents  some difficulties is the first one. We use that for every $\varphi\in H^1_0(\Om)^3$, (\ref{Bounduv}) gives
    \beq\label{EcuEP3c}\ba{l}\dis {\ep^3\over r_\ep^3}\left| \int_{\Om_\eps^s}\big(\partial_t u_\eps(t)-u_\eps^1\big)\cdot\varphi\,dx\right|\leq C{\ep^{3/ 2}\over r_\ep^{3/2}}D_\ep(t)^{1\over 2} \left(\int_{\Om_\ep^s}\big|\varphi\big|^2dx\right)^{1\over 2}.\ea\eeq
    In the right-hand side we use that thanks to H\"older's and Sobolev's inequalities, for every $k\in \ZZ^3$, we have
    $$ \int_{\ep k+r_\ep{\cal O}}\Big|\varphi-{1\over \ep^3}\int_{C_k^\ep}\varphi\,dy\Big|^2dx\leq C\left( \int_{C_k^\ep}\Big|\varphi-{1\over \ep^3}\int_{C_k^\ep}\varphi\,dy\Big|^6dx\right)^{1\over 3}r_\ep^2 \leq Cr_\ep^2\int_{C_k^\ep}|D_x\varphi|^2dx,$$
    and then
    $$ \int_{\ep k+r_\ep{\cal O}}\hskip-5pt|\varphi|^2dx\leq C{r_\ep^3\over \ep^3}\int_{C_k^\ep}|\varphi|^2dx+Cr_\ep^2\int_{C_k^\ep}|D_x\varphi|^2dx.$$
    Adding in $k$, and replacing in (\ref{EcuEP3c}), we obtain
    $$\ba{l}\dis {\ep^3\over r_\ep^3}\left| \int_{\Om_\eps^s}\big(\partial_t u_\eps(t)-u_\eps^1\big)\cdot\varphi\,dx\right|\leq CD_\ep(t)^{1\over 2} \left(\into|\varphi|^2dx+C{\ep^3\over r_\ep}\into|D_x\varphi|^2dx\right)^{1\over 2}.\ea$$
    Using this estimate in the definition of $G_\ep$, and taking into account Poincar\'e's inequality, we deduce
    \beq\label{EcuEP3d}\|G_\ep(t)\|_{H^{-1}(\Om)^3}
    \leq C\Big(1+{\ep^3\over r_\ep}\Big)^{1\over 2}D_\ep(t)^{1\over 2},\quad \forall\, t\in [0,T],\eeq
    which combined with (\ref{EcuEP3b}) proves the estimate for $\|P_\ep(t)\|_{L^2(\Om)}$ in (\ref{proPeps4}).\par
    Let us now estimate $|\zeta_\ep(t)|$. For this purpose, we define  $\psi\in H^1(Y)^3$ by
    $$\psi=1\ \hbox{ in }Y\setminus {r_\ep\over\ep}\overline{{\cal O}},\qquad \psi=\Big(1-{\ep^3\over r_\ep^3|{\cal O}|}\Big)\ \hbox{ in }{r_\ep\over\ep}{\cal O}.$$
    Then, since $\psi$ has null integral in $Y$, there exists $\phi\in H^1_0(Y)^3$ such that
    $${\rm div}_x\phi=\psi\ \hbox{ in }Y,\qquad\|\phi\|_{H^1_0(Y)^3}\leq C\|\psi\|_{L^2(Y)}\leq C{\ep^{3/ 2}\over r_\ep^{3/ 2}}.$$
    From this function $\phi$ we define $\varphi$ as
    $$\varphi(x)=\left\{\ba{ll}\dis \ep \phi\Big({x-\ep k\over \ep}\Big)&\ \hbox{ if }\ep(k+Y)\subset\Om\\ \ecart\dis 0  &\hbox{ otherwise}\ea\right.$$
    Using it in (\ref{EcuEP3}) we get
    $$\Big(1-{\ep^3\over r_\ep^3|{\cal O}|}\Big)\zeta_\ep(t)|\Om_\ep^s|+\int_{\Om_\ep^l}P_\ep(t,x) dx=\langle G_\ep(t),\varphi\rangle_{H^{-1}(\Om)^3,H^1_0(\Om)^3},$$
    which recalling that $P_\ep(t)$ has null integral in $\Om$, and then that
    $$\int_{\Om_\ep^l}P_\ep(t,x) dx=-\int_{\Om_\ep^s}P_\ep(t,x) dx=-\zeta_\ep(t)|\Om_\ep^s|,$$
    provides
    $$|\zeta_\ep(t)|\leq
        C\Big[{\ep^3\over r_\ep^3}\Big(1+{\ep^3\over r_\ep}\Big)\Big]^{1\over 2}D_\ep(t)^{1\over 2}.$$\end{proof}
\begin{remark} \label{Re2PriEst} By (\ref{proPeps}) and (\ref{proPeps4}), we get the following  estimate for the pressure
    \beq\label{espeL2}\|p_\ep\|_{W^{-1,\infty}(0,T; L^2(\Om_\ep^l))}\leq C\Big(1+{\ep^3\over r_\ep}\Big)^{1\over 2}\|D_\varepsilon\|_{L^\infty(0,T)}^{1\over 2}.\eeq
    Another estimate can be obtained using the results in \cite{All2} where it is proved that if $L_\ep\in H^{-1}(\Om_\ep^l)^3$ satisfies
    $$\langle L_\ep,\varphi\rangle_{H^{-1}(\Om_\ep^l)^3,H^1_0(\Om_\ep^l)^3}=0,\quad \forall\, \varphi\in H^1_0(\Om_\ep^l)^3\hbox{ with }{\rm div}_x\varphi=0\hbox{ in }\Om_\ep^l,$$
    then there exists $q_\ep\in L^2(\Om)$ such that
    $$\langle L_\ep,\varphi\rangle_{H^{-1}(\Om_\ep^l)^3,H^1_0(\Om_\ep^l)^3}=\int_{\Om_\ep^l}q_\ep\,{\rm div}_x\varphi\,dx,\quad \forall\varphi\in H^1_0(\Om_\ep^l)^3,$$
    $$ \|q_\ep\|_{L^2(\Om)}\leq C\Big(1+{r_\ep\over \ep^3}\Big)^{1\over 2}\|L_\ep\|_{H^{-1}(\Om_\ep^l)^3},\quad \into q_\ep\,dx=0.$$
    Applying this result to the distribution $G_\ep(t)$ in the proof of Proposition \ref{prop:apriori},  we deduce the existence of $Q_\ep\in C^0([0,T];L^2(\Om))$, with $Q_\ep(0)=0$, such that
    $$\langle G_\ep(t),\varphi\rangle_{H^{-1}(\Om_\ep^l)^3,H^1_0(\Om_\ep^l)^3}=\int_{\Om_\ep^l}Q_\ep(t,x){\rm div}_x\varphi\,dx,\ \ \forall\varphi\in H^1_0(\Om_\ep^l)^3,$$
    \beq\label{relPQ2} \into Q_\ep\,dx=0,\ \|Q_\ep\|_{L^2(\Om)}\leq C\Big(1+{r_\ep\over \ep^3}\Big)^{1\over 2}\|G_\ep\|_{H^{-1}(\Om_\ep^l)^3}\ \hbox{ a.e. in }(0,T),\eeq
    where using the definition of $G_\ep$ in the proof of Proposition \ref{prop:apriori} it is simple to check
    $$\|G_\ep(t)\|_{H^{-1}(\Om_\ep^l)^3}\leq CD_\ep(t)^{1\over 2},\quad\forall\,t\in [0,T].$$
    Comparing with (\ref{EcuEP3}) we deduce the existence of $\eta_\ep\in C^0([0,T])$, such that the function $P_\ep$ in Proposition \ref{prop:apriori} also satisfies
    \beq\label{relPQ3} P_\ep(t,x)=Q_\ep(t,x)+\eta_\ep(t)\ \hbox{ a.e. in }\Om_\ep^l,\ \ \forall t\in [0,T].\eeq
    Combining (\ref{proPeps}), (\ref{proPeps4}), (\ref{relPQ2}) and (\ref{relPQ3}) we deduce in particular the estimate
    \beq\label{Esgrapre}\|\nabla_x p_\ep\|_{W^{-1,\infty}(0,T; H^{-1}(\Om_\ep^l))^3}\leq C\|D_\varepsilon\|_{L^\infty(0,T)}^{1\over 2},\eeq
    where $C$ only depends on $\Om$, ${\cal O}$, $\rho^s$, $\rho^l$, $\alpha$, and $|A|$.
\end{remark}

\begin{remark} \label{EDPPe}
    Taking in (\ref{ecuvarGe}) $\varphi=\varphi(t,x)$ with $\varphi\in L^\infty(0,T; H^1_0(\Om))^3$, using (\ref{EcuEP3}) and Fubini's theorem, we get
    $$\ba{l}\dis \int_0^T\into P_\ep\, {\rm div}_x\varphi\,dxdt
        =-{\eps^3\over r_\eps^3}\rho^s\int_{\Om_\eps^s}u^1_\ep\cdot\int_0^T\varphi\,dtdx\\ \ecart\dis
        +{\eps^3\over r_\eps^3}\rho^s\int_0^T\int_{\Om_\eps^s}\partial_t u_\ep\cdot \varphi\,dxdt
        -\rho^l\int_{\Om_\eps^l}v^0_\ep\cdot\int_0^T\varphi\,dtdx\\ \ecart\dis+ \rho^l\int_0^T\int_{\Om_\eps^l}v_\ep\cdot \varphi\,dxdt
        +{\eps^3\over r_\eps}\int_0^T\int_{\Om_\eps^s}Ae_x(u_\ep):e_x\Big(\int_t^T\varphi\, ds\Big)dxdt
        \\ \ecart\dis +2\mu\int_0^T\int_{\Om_\eps^l}e_x(v_\eps):e_x\Big(\int_t^T\varphi\, ds\Big)dxdt-\int_0^T\into f_\eps \cdot\int_t^T\varphi\,ds\,dxdt, \ea$$
    that by rearranging the terms and renaming $\varphi$ to the function $\int_t^T\varphi\,ds$, in such way that now $\varphi$ is a function in $W^{1,\infty}(0,T;H^1_0(\Om))^3$ satisfying $\varphi(T,x)=0$ a.e. $x\in \Om$, proves that the solution of   the system (\ref{Peps}) satisfies the variational equation
    \beq\label{ecuvaSol}\ba{c}
    \ba{l}\dis
    -{\eps^3\over r_\eps^3}\rho^s\int_0^T\int_{\Om_\eps^s}\partial_t u_\ep\cdot\partial_t\varphi\,dxdt +{\eps^3\over r_\eps}\int_0^T\int_{\Om_\eps^s}Ae_x(u_\ep):e_x(\varphi)\,dxdt
    \\ \ecart\dis- \rho^l\int_0^T\int_{\Om_\eps^l}v_\ep\cdot\partial_t\varphi\,dxdt
    +2\mu\int_0^T\int_{\Om_\eps^l}e_x(v_\eps):e_x(\varphi)dxdt\\ \ecart\dis
    +\int_0^T\into P_\ep\, {\rm div}_x\partial_t\varphi\,dxdt=\int_0^T\into f_\eps \cdot\varphi\,dxdt\\ \ecart\dis
    + {\eps^3\over r_\eps^3}\rho^s\int_{\Om_\eps^s}u^1_\ep\cdot\varphi(0,x)\,dx+\rho^l\int_{\Om_\eps^l}v^0_\ep\cdot\varphi(0,x)\,dx,\\ \ecart\dis
    \forall\, \varphi\in W^{1,\infty}(0,T;H^1_0(\Om))^3,\ \hbox{ such that }\ \varphi(T,x)=0\ \hbox{ a.e. in }\Om.\ea\ea\eeq
\end{remark}
\section{The homogenization results}
The objective of this section is to carry out the homogenization of (\ref{Peps}). To this end we will adapt the ideas in \cite{Cas}, which are closely related to the unfolding and two-scale convergence methods (\cite{All}, \cite{ArDoHo}, \cite{CiDaGr}, \cite{Ngu}). Our main results are given by Theorems \ref{thm:co2e}, \ref{TeoCo} and \ref{enlgafi}.
\par
Taking into account Proposition \ref{prop:apriori}, let us assume that the initial data and the right-hand side $f_\ep$ in (\ref{FSI}) satisfy
\beq\label{acodatos} \left\{\hskip-3pt\ba{l}\dis \exists C>0\hbox{ such that for every }\ep>0,\hbox{ we have}\\ \ecart\dis {\eps^3\over r_\eps}\|u_{\eps}^0\|^2_{H^1(\Omega_{\eps}^s)^3}+
\frac{\eps^3}{r_{\eps}^3}\|u_{\eps}^1\|^2_{L^2(\Omega_{\eps}^s)^3}  + \|v^0_{\eps}\|^2_{L^2(\Omega_{\eps}^l)^3}+\|f_\eps\|^2_{L^1(0,T;L^2(\Om))^{3}}\leq C,\ea\right.
\eeq
Moreover, to avoid a limit force given by a measure, we will assume that $f_\ep$ satisfies the  equi-integrability condition
\beq\label{equiinfe} \forall\, \gamma>0,\ \ \exists\delta>0\ \hbox{ such that }
\int_E\|f_\ep\|_{L^2(\Om)^3}\leq \gamma,\ \ \forall\,\ep>0,\ \forall\, E\subset [0,T],\ \ |E|<\delta.\eeq
\par In these conditions Proposition \ref{prop:apriori} and Remark \ref{Re2PriEst} prove the existence of $C>0$ such that
\beq\label{espso}
\ba{l}\dis
\frac{\eps^3}{r_{\eps}^3}\|\partial_tu_{\eps}\|^2_{L^\infty(0,T;L^2(\Omega_{\eps}^s))^3}\hskip-2pt + \hskip-2pt
{\eps^{3}\over r_\eps}\|u_{\eps}\|^2_{L^\infty(0,T;H^1(\Omega_{\eps}^s))^{3\times 3}}\hskip-2pt  +\hskip-2pt  \|v_{\eps}\|^2_{L^\infty(0,T;L^2(\Omega_{\eps}^l))^3}\hskip-2pt\\ \ecart\dis
+\|e_x(v_\eps)\|^2_{L^2(0,T;L^2(\Om_\eps^l))^{3\times 3}}+\Big(1+{r_\ep\over\ep^3}\Big)\|p_\ep\|^2_{W^{-1,\infty}(0,T;L^2(\Om))}\\ \ecart\dis \hskip6cm+\|\nabla_xp_\ep\|_{W^{-1,\infty}(0,T;H^{-1}(\Om))^3}^2\leq
C.\ea
\eeq\par
As usual, we want to use these estimates to extract a subsequence for which the solutions $u_\ep$, $v_\ep$, $p_\ep$ converge to some limits that we want to identify. The first difficulty arises from the fact that these sequences are defined in spaces that vary with $\ep$. In this sense, we will use  the following lemma which is proved in  Theorems 4.2 and 4.4 in \cite{OlShYo}
\begin{lemma}
    There exists a linear extension operator $\mathcal{E}_{\eps}: H^1_{\partial \Omega}(\Omega_{\eps}^l)^3 \to H^1_0(\Omega)^3$ such that
    \begin{equation*}
        \mathcal{E}_{\eps} v = v \text{ in } \Omega_{\eps}^l, \quad \forall v \in H^1_{\partial \Omega}(\Omega_{\eps}^l)^3,
    \end{equation*}
    and
    \begin{equation}
        \|\mathcal{E}_{\eps} v\|_{H^1_0(\Omega)^3} \leq C \left(\|v\|_{L^2(\Omega_{\eps}^l)^3} + \|e_x(v)\|_{L^2(\Omega_{\eps}^l)^{3\times 3}}\right), \quad \forall v \in H^1_{\partial \Omega}(\Omega_{\eps}^l)^3,
        \label{Extensionu}
    \end{equation}
    with $C$ independent of $\eps$ and $v$.
\end{lemma}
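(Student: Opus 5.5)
The plan is to build $\mathcal{E}_\eps$ cell by cell, from a single fixed extension operator on the reference cell, and to obtain uniform bounds by rescaling. In this geometry the holes $r_\eps{\cal O}$ are much smaller than the cells. Since $\overline{\cal O}\subset Y$, fix a ball $B_R$ with $\overline{\cal O}\subset B_R$ and $\overline{B_{2R}}\subset Y$. On the annulus-type domain $B_{2R}\setminus\overline{\cal O}$ we have a bounded linear extension operator $P:H^1(B_{2R}\setminus\overline{\cal O})^3\to H^1(B_{2R})^3$, given by the Lipschitz boundary of ${\cal O}$ (Stein or Calderón extension). We then correct $P$ so that it preserves rigid displacements. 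Let $\Pi w$ be the $L^2(B_{2R}\setminus\overline{\cal O})$-projection of $w$ onto the six-dimensional space ${\cal R}$ of rigid displacements $a+Bx$, $B$ skew. Define
$$\widetilde P w=\Pi w+P(w-\Pi w)\quad\hbox{in } {\cal O},\qquad \widetilde P w=w\quad\hbox{outside } {\cal O}.$$
Korn's inequality on the connected Lipschitz set $B_{2R}\setminus\overline{\cal O}$ gives $\|w-\Pi w\|_{H^1}\le C\|e(w)\|_{L^2}$. Hence
$$\|e(\widetilde P w)\|_{L^2(B_{2R})}\le C\|e(w)\|_{L^2(B_{2R}\setminus\overline{\cal O})}.$$
The gradient bound also needs the rigid part. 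For that we use $\|\Pi w\|_{H^1(B_{2R})}\le C\|w\|_{L^2(B_{2R}\setminus {\cal O})}$, which holds because ${\cal R}$ is finite-dimensional.

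Next comes the scaling to the $\eps$-problem. For each cell with $\eps k+\eps Y\subset\Om$, set $\delta=r_\eps$ and define $(\mathcal{E}_\eps v)(x)=(\widetilde P v_k)((x-\eps k)/r_\eps)$ on $\eps k+r_\eps B_{2R}$, where $v_k(y)=v(\eps k+r_\eps y)$. Outside the holes we keep $v$; this is consistent because $\widetilde P$ does not modify $w$ outside ${\cal O}$. The symmetric gradient estimate is scale invariant:
$$\|e(\mathcal{E}_\eps v)\|_{L^2(\eps k+r_\eps{\cal O})}\le C\|e(v)\|_{L^2(\eps k+r_\eps(B_{2R}\setminus{\cal O}))}.$$
Summing over $k$ gives $\|e(\mathcal{E}_\eps v)\|_{L^2(\Om)}\le C\|e(v)\|_{L^2(\Om_\eps^l)}$. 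Since $v=0$ on $\partial\Om$ and the holes are compactly contained in $\Om$, the extension lies in $H^1_0(\Om)^3$. Korn's inequality in $H^1_0(\Om)$, namely $\|D\phi\|_{L^2}\le\sqrt2\|e(\phi)\|_{L^2}$, then gives
$$\|\mathcal{E}_\eps v\|_{H^1_0(\Om)}\le C\|e(v)\|_{L^2(\Om_\eps^l)},$$
which is even stronger than (\ref{Extensionu}).

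I expect the main obstacle to be the uniformity of the Korn constant under scaling. A scaled Korn inequality without the rigid correction would produce a term $r_\eps^{-1}\|v\|_{L^2}$ that blows up. The correction by $\Pi$ removes this term: the estimate for $e(\widetilde P w)$ involves only $e(w)$, and that is dilation invariant, so no $L^2$ term appears at the microscale. One must still check that $\widetilde P w\in H^1(B_{2R})$, i.e.\ that the traces match on $\partial{\cal O}$. This holds because $P(w-\Pi w)$ agrees with $w-\Pi w$ outside ${\cal O}$, so its trace from inside equals that of $w-\Pi w$.
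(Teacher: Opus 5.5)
Your proof is correct and follows essentially the construction behind the result the paper only cites (Oleinik--Shamaev--Yosifian, Theorems 4.2 and 4.4). The paper gives no argument of its own: you use a reference extension that reproduces rigid displacements, so that $\|e(\widetilde Pw)\|\le C\|e(w)\|$ is dilation invariant; you apply it at scale $r_\eps$ around each inclusion and then use Korn's inequality in $H^1_0(\Om)^3$, which even removes the $L^2$ term from the stated estimate. One small fix: a ball with $\overline{\cal O}\subset B_R$ and $\overline{B_{2R}}\subset Y$ need not exist. You do not need it, though: either take any $R$ with $\overline{\cal O}\subset B_R$ and use $r_\eps/\eps\to 0$ (then also check that $B_{2R}\setminus\overline{\cal O}$ is connected, which follows from the connectedness of $Y\setminus\overline{\cal O}$), or simply take $Y\setminus\overline{\cal O}$ itself as the reference domain, whose $r_\eps$-dilate lies inside each cell as soon as $r_\eps<\eps$.
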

Thanks to this lemma, we can extend $v_{\eps}^0$ to the whole of $\Omega$ and thus, using (\ref{acodatos}), assume that $v_{\eps}^0$ is a bounded sequence in $H^1_0(\Omega)^3$. Analogously, we can extend $v_\ep$ to $\Om$ to get a sequence which is bounded in
$W^{1,\infty}(0,T;L^2(\Om))^3\cap L^\infty(0,T;H^1_0(\Om))^3$. Now the question is what can we do with $u^0_\ep$, $u^1_\ep$ and $u_\ep$. It is here where we can apply the unfolding method (\cite{ArDoHo}, \cite{Cas}, \cite{CiDaGr}), which
will be also useful to study the behavior of $v_\ep$ near the interface $\partial\Om_\ep^s$. The idea is as follows: \par
In order to study the behavior of a sequence of measurable functions $w_\ep$ defined in the varying spaces $ \Om_\ep^s$, let us study the behavior of  the sequence $\hat w_\ep:\Om\times {\cal O}\to \RR$ defined as
$$\hat w_\ep(x,y)=w_\ep\Big(\ep\kappa\Big({x\over \ep}\Big)+r_\ep y\Big),\quad \hbox{ a.e. }(x,y)\in \Om^\ast_\ep\times{\cal O}.$$
Note that for $x\in C_k^\ep\subset\Om$, the sequence $\hat w_\ep(x,y)$ does not depend on $x$ while as function of $y$  it is just the result of using the change of variables
$$y={x-\ep k\over r_\ep},$$
which transforms the set $\ep k+r_\ep{\cal O}$ into ${\cal O}$. The variable $x$ represents the macroscopic variable and $y$ the microscopic one. \par
Analogously, we can use the same idea to study the behavior near $\Om_\ep^l$ of a sequence $w_\ep$ defined in the whole of $\Om$. \par
We will use the following compactness lemma
\begin{lemma} \label{lemcomuf}
    Let $u_\ep,v_\ep$ be two functions in $W^{1,\infty}(0,T;L^2(\Om_\ep^s))^3\cap L^{\infty}(0,T;H^1(\Om_\ep^s))^3$ and  $W^{1,\infty}(0,T;L^2(\Om))^3\cap L^2(0,T;H^1_0(\Om))^3$ respectively, such that
    \beq\label{hilecp2}\ba{l}\dis
    \frac{\eps^3}{r_{\eps}^3}\|\partial_tu_{\eps}\|^2_{L^\infty(0,T;L^2(\Omega_{\eps}^s))^3} +
    {\eps^{3}\over r_\eps}\|u_{\eps}\|^2_{L^\infty(0,T;H^1(\Omega_{\eps}^s))^3} \\ \ecart\dis +  \|v_{\eps}\|^2_{L^\infty(0,T;L^2(\Omega))^3}
    +\|D_xv_\eps\|^2_{L^2(0,T;L^2(\Om))^{3\times 3}}\leq
    C,\ea
    \eeq
    \beq\label{hidiv=0} {\rm div}_xv_\ep=0\ \hbox{ in }(0,T)\times\Om_\ep^l,\eeq
    \beq\label{hilecp3}\partial_tu_\ep=v_\ep\ \hbox{ on }(0,T)\times\partial\Om_\ep^s,
    \eeq
    then, defining $\hat u_\ep:(0,T)\times\Om^\ast_\ep\times {\cal O}\to\RR^3$ by
    \beq\label{defhue}  \hat u_\ep(t,x,y)=u_\ep\Big(t,\ep\kappa\Big({x\over\ep}\Big)+r_\ep y\Big)\ \hbox{ a.e. }(t,x,y)\in (0,T)\times\Om^\ast_\ep\times {\cal O}, \eeq
    there exist  two functions $\hat u\in W^{1,\infty}(0,T;L^2(\Om;L^2({\cal O})))^3\cap L^\infty(0,T;L^2(\Om;H^1({\cal O})))^3$ and $v\in W^{1,\infty}(0,T;L^2(\Om))^3\cap L^2(0,T;H^1_0(\Om))^3,$ with
    \beq\label{codiv0}{\rm div}_x v=0\ \hbox{ in }(0,T)\times\Om,\eeq
    \beq\label{confhu} {d\over dt}\int_{\partial{\cal O}} \hat u\cdot\nu\,ds(y)=0\ \hbox{ in }(0,T)\times \Om,\eeq
    such that  for a subsequence, we have
    \begin{equation}\label{codehue}\ba{c}\dis
        \hat u_{\eps}\stackrel{\ast} \rightharpoonup \hat u \quad \text{in } W^{1,\infty}(0,T;L^2(K;L^2({\cal O})))^3\cap L^\infty(0,T;L^2(K;H^1({\cal O})))^3\\ \ecart\dis\forall\, K\subset\Om,\ \hbox{ compact.}\ea
    \end{equation}
    \begin{equation}\label{codeve}
        v_{\eps} \rightharpoonup v \quad \text{in } L^2(0,T;H^1_0(\Omega))^3.
    \end{equation}
    Moreover, if we assume that
    \beq\label{hilcp1}\exists\lim_{\ep\to 0}{r_\ep\over\ep^3}=\gamma\in (0,\infty],\eeq
    then, defining $\hat v_\ep$ as ($v_\ep$ is assumed to be extended by zero outside $\Om$)
    \beq\label{defhve}\hat v_\ep(t,x,y)=v_\ep\Big(t,\ep\kappa\Big({x\over\ep}\Big)+r_\ep y\Big)\ \hbox{ a.e. }(t,x,y)\in (0,T)\times\Om\times (\ep Y/r_\ep),\eeq
    there exists $\hat v:(0,T)\times \Om\times \RR^3\to\RR^3$ such that, for a subsequence,
    we have
    \begin{equation}\label{codehve}
        \hat v_{\eps} \rightharpoonup \hat v \quad \text{in } L^2(0,T;L^2(\Omega;H^1(B(0,R)\setminus{\cal O})))^{3},\quad \forall R>0,
    \end{equation}
    \beq \label{dhuhv}\partial_t\hat u=\hat v\ \hbox{ on }(0,T)\times\Om\times\partial{\cal O}.\eeq
    \par\medskip
    If $\gamma=\infty$, then
    \beq\label{vainre}\hat v=v\ \hbox{ a.e. in }(0,T)\times\Om\times\RR^3.\eeq\par
    If $\gamma\in(0,\infty)$, then
    \beq\label{rehvgi} \hat v-v\in L^2(0,T;L^2(\Omega;D^{1,2}(\RR^3)))^{3}.\eeq
    \beq\label{diyhv0}{\rm div}_y\hat v=0\ \hbox{ in }(0,T)\times\Om\times\big(\RR^3\setminus{\cal O}\big)\eeq\par
\end{lemma}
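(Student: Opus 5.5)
The plan is to treat the solid and fluid unknowns separately, passing to the limit through the change of variables in (\ref{defhue}) and (\ref{defhve}), and then to recover the interface relations by unfolding the traces.

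\textbf{Bounds for the unfolded solid deformation.} A cube $C_k^\ep$ has volume $\ep^3$. The substitution $x=\ep k+r_\ep y$ produces a factor $r_\ep^3$ and turns $D_x$ into $r_\ep^{-1}D_y$. Hence
$$\int_{\Om_\ep^\ast\times{\cal O}}|\hat w_\ep|^2dxdy=\frac{\ep^3}{r_\ep^3}\int_{\Om_\ep^s}|w_\ep|^2dx,\qquad \int_{\Om_\ep^\ast\times{\cal O}}|D_y\hat w_\ep|^2dxdy=\frac{\ep^3}{r_\ep}\int_{\Om_\ep^s}|D_xw_\ep|^2dx.$$
By (\ref{hilecp2}), $\partial_t\hat u_\ep$ is therefore bounded in $L^\infty(0,T;L^2(\Om^\ast_\ep\times{\cal O}))^3$. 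Similarly $D_y\hat u_\ep$ is bounded in $L^\infty(0,T;L^2)$, since the hypothesis already controls the full $H^1$ norm. The scaling of $\hat u_\ep$ itself is awkward, because its $L^2$ norm carries the factor $\ep^3/r_\ep^3$ while the hypothesis gives $\ep^3/r_\ep$. I will obtain the $L^2$ bound either from Poincaré--Korn in ${\cal O}$ together with control of the mean, or by integrating $\partial_t\hat u_\ep$ in time starting from $\hat u_\ep(0)$, which is bounded once the initial data are (as in (\ref{acodatos})). Every compact $K\subset\Om$ lies in $\Om_\ep^\ast$ for small $\ep$, so weak-$\ast$ compactness gives (\ref{codehue}).

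\textbf{Fluid velocity, divergence, and the trace relation.} Weak compactness gives (\ref{codeve}). The divergence condition (\ref{codiv0}) requires ${\rm div}_xv_\ep\to0$ on the solid set. We have ${\rm div}_xv_\ep=0$ off $\Om_\ep^s$, and the solid set has volume fraction $r_\ep^3/\ep^3\to0$. So for $\phi\in{\cal D}$, Hölder gives
$$\int_{\Om_\ep^s}{\rm div}_xv_\ep\,\phi\le \|D v_\ep\|_{L^2}\,\|\phi\|_{L^2(\Om_\ep^s)}\to0.$$
For (\ref{confhu}), I integrate ${\rm div}_xv_\ep$ over each inclusion and use the divergence theorem together with (\ref{hilecp3}):
$$\int_{\partial(\ep k+r_\ep{\cal O})}\partial_tu_\ep\cdot\nu=\int_{\ep k+r_\ep{\cal O}}{\rm div}_xv_\ep.$$
Unfolding, the first expression equals $r_\ep^2\,\frac{d}{dt}\int_{\partial{\cal O}}\hat u_\ep\cdot\nu$ and the second equals $r_\ep^3\int_{\cal O}{\rm div}_y\hat v_\ep/r_\ep$. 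Both scale alike, so the claim reduces to showing $\int_{\cal O}{\rm div}_y\hat v_\ep\to0$. I will get this from the $L^2$ bound on $D_x v_\ep$ over the small inclusions, which gives
$$\|D_y\hat v_\ep\|_{L^2(\Om\times B_R)}^2\le C\,\frac{\ep^3}{r_\ep}\,\|D_xv_\ep\|_{L^2}^2 .$$
This is bounded only when $\gamma>0$, so in general I will argue directly in the distributional sense with test functions independent of $y$.

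\textbf{Convergence of the unfolded velocity when $\gamma\in(0,\infty]$.} The estimate just displayed, with $\ep^3/r_\ep\to1/\gamma$, bounds $D_y\hat v_\ep$. For the $L^2$ part I will use a Sobolev/Poincaré argument on the cube: the difference $\hat v_\ep-\hat v_\ep^{\,\rm avg}$, where the average is the cell average of $v_\ep$, is bounded in $L^6$ in $y$, and the cell average converges to $v$. This yields (\ref{codehve}). Continuity of unfolded traces then gives (\ref{dhuhv}) in the limit. When $\gamma=\infty$, $D_y\hat v_\ep\to0$, so the limit is constant in $y$ and equals the cell-average limit $v$, which is (\ref{vainre}). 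When $\gamma<\infty$, the uniform $L^6$ and $\nabla$ bounds on the whole space pass to the limit, giving (\ref{rehvgi}). Finally (\ref{diyhv0}) follows from ${\rm div}_y\hat v_\ep=r_\ep\,\widehat{{\rm div}_xv_\ep}=0$ outside ${\cal O}$.

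\textbf{Main obstacle.} I expect the hardest steps to be the identification of $v-\hat v\in D^{1,2}$ and the uniform $L^2$ bound on $\hat u_\ep$. Both need care with the varying scale $\ep/r_\ep$ of the cell.
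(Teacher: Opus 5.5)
Your handling of the unfolded velocity is the paper's argument: the scaling identities, the cell averages $m_\ep$ with the Sobolev--Wirtinger inequality on $\frac{\ep}{r_\ep}Y$, the split $\gamma=\infty$ versus $\gamma<\infty$, and the limit in the traces. The genuine gap is (\ref{confhu}).

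\textbf{The flux condition (\ref{confhu}).} Your identity $\frac{d}{dt}\int_{\partial\mathcal O}\hat u_\ep\cdot\nu\,ds=\int_{\mathcal O}{\rm div}_y\hat v_\ep\,dy$ is correct. But its right-hand side is only controlled by $\big\|\int_{\mathcal O}{\rm div}_y\hat v_\ep\,dy\big\|^2_{L^2((0,T)\times\Om_\ep^\ast)}\le C\ep^3/r_\ep$. Your fallback with $y$-independent test functions gains nothing. Unfolding back, $\int_{\Om_\ep^\ast}\psi\int_{\mathcal O}{\rm div}_y\hat v_\ep\,dy\,dx=\frac{\ep^3}{r_\ep^2}\sum_k\bar\psi_k\int_{\ep k+r_\ep\mathcal O}{\rm div}_xv_\ep\,dx$, where $\bar\psi_k$ is the mean of $\psi$ on $C_k^\ep$, and Cauchy--Schwarz again bounds this only by $C(\ep^3/r_\ep)^{1/2}$. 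So your argument gives (\ref{confhu}) exactly when $\ep^3/r_\ep\to0$. For $\gamma=\infty$ it also follows directly from (\ref{vainre}) and (\ref{dhuhv}).

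Do not try to close the gap with the paper's argument. From ${\rm div}_xv_\ep=0$ in $\Om_\ep^l$ and $v_\ep=0$ on $\partial\Om$, the divergence theorem only gives $\sum_k\int_{\partial(\ep k+r_\ep\mathcal O)}v_\ep\cdot\nu\,ds=0$. It does not give that each flux vanishes.

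In fact (\ref{confhu}) is false under the hypotheses of the lemma whenever $r_\ep/\ep^3$ stays bounded. Here is a counterexample.
\begin{itemize}
\item Take $g\in C^\infty_c(\mathcal O)$ with $\int g\,dy=1$ and $w=\nabla(\Gamma*g)$, where $\Gamma(y)=-1/(4\pi|y|)$. Then ${\rm div}_yw=g$, $Dw\in L^2(\RR^3)$ and $\int_{\partial\mathcal O}w\cdot\nu\,ds=1$.
\item Take signs $s_k\in\{-1,0,1\}$ with $\sum_ks_k=0$ and $s_{\kappa(x/\ep)}\to s$, where $s=\pm1$ on two disjoint balls in $\Om$.
\item Set $v_\ep=V_\ep$ and $u_\ep=tV_\ep$, with $V_\ep=\sum_ks_k\,w\big(\frac{x-\ep k}{r_\ep}\big)\chi\big(\frac{x-\ep k}{\ep}\big)+W_\ep$. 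Here $\chi\in C^\infty_c(B(0,1/4))$ equals $1$ on $B(0,1/8)$.
\item The correction $W_\ep$ is a Bogovskii field cancelling the divergence created by the cut-offs. That divergence lives in the fluid, has zero mean, and has $L^2$ norm $O(r_\ep^2/\ep^3)$, so $\|W_\ep\|_{H^1}=O(r_\ep^2/\ep^3)$.
\end{itemize}
The Dirichlet energy of $V_\ep$ is $O(r_\ep/\ep^3+r_\ep^4/\ep^6)$, and (\ref{hilecp2})--(\ref{hilecp3}) hold. Yet $\hat u=t\,s(x)w(y)$, so $\frac{d}{dt}\int_{\partial\mathcal O}\hat u\cdot\nu\,ds=s(x)\not\equiv0$. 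The same $w$ shows that (\ref{rehvgi}) and (\ref{diyhv0}) do not imply (\ref{confhu}), contrary to Remark \ref{reconfhu}. For $\gamma<\infty$ the only constraint available is the vanishing of the total flux over all inclusions.

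\textbf{The $L^2$ bound on $\hat u_\ep$.} You are right that the paper's proof bounds only $\partial_t\hat u_\ep$ and $e_y(\hat u_\ep)$, so the $L^2$ part of (\ref{codehue}) is not covered there. Your first remedy (Korn plus control of the mean) cannot work, because nothing controls the mean or rigid part. Indeed $u_\ep\equiv c_\ep$ with $|c_\ep|=1/r_\ep$ and $v_\ep=0$ satisfy (\ref{hilecp2})--(\ref{hilecp3}), while $\hat u_\ep=c_\ep$ is unbounded. Your second remedy (integrate $\partial_t\hat u_\ep$ from $t=0$) works, but only under an extra hypothesis: $\hat u_\ep(0)$ bounded in $L^2(\Om\times\mathcal O)^3$. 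This does not follow from (\ref{acodatos}), which carries the same weight $\ep^3/r_\ep$ as (\ref{hilecp2}); it comes from (\ref{convu0e}). It must be added to the statement.
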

\begin{remark} \label{reconfhu} For $\gamma\in (0,\infty]$ conditions (\ref{vainre}) or (\ref{rehvgi}), (\ref{diyhv0}) imply (\ref{confhu}). Thus, this condition only acts for $\gamma=0$.
\end{remark}
\begin{proof}[Proof of Lemma \ref{lemcomuf}] The existence of $v$ such that (\ref{codeve}) holds for a subsequence just follows from $v_\ep$ bounded in $W^{1,\infty}(0,T;L^2(\Om))^3\cap L^2(0,T;H^1_0(\Om))^3$ and having null divergence in $\Om_\ep^l$ where the measure of $\Om\setminus\Om_\ep^l$ goes to zero.\par
    Now, using the change of variables $y=(x-\ep k)/r_\ep$ in each cube $\ep(k+y)$, $k\in \ZZ^3$, we have
    $$
        \int_{\Omega_\ep^\ast} \int_{\cal O}\big( |\partial_t \hat{u}_{\eps}|^2 + |e_y(\hat{u}_{\eps})|^2 \big) dydx=     \frac{\eps^3}{r_{\eps}} \int_{\Omega_{\eps}^s} \Big({1\over r_\ep^2}|\partial_t u_{\eps}|^2 +
        |e_x(u_{\eps})|^2 \Big)dydx\leq C\ \hbox{ a.e. in }(0,T).
    $$
    This implies the existence of $\hat u\in W^{1,\infty}(0,T;L^2(\Om;L^2({\cal O})))^3\cap L^\infty(0,T;L^2(\Om;H^1({\cal O})))^3$ such that (\ref{codehue}) holds for a subsequence of $\ep$. \par
    Taking into account the boundary condition $\partial_t u_\ep=v_\ep$ on $(0,T)\times\partial\Om_\ep^s$, where $v_\ep$ vanishes on $\partial\Om$ and has null divergence in $\Om_\ep^l$, we have that
    $$\int_{\ep k+r_\ep\partial{\cal O}}\partial_t u_\ep\cdot\nu\,ds=0\ \hbox{ in }(0,T),\ \ \forall k\in \RR^3,\ k+\ep Y\subset\Om.$$
    As a consequence the function $\hat u$ must satisfy (\ref{confhu}).
    \par
    Now,  we introduce
    \begin{equation*}
        m_{\eps}(t,x) := \fint_{\ep Y/r_\ep}\hskip-8pt \hat{v}_{\eps}(t,x,y) \, dy=\fint_{\ep(\kappa({x\over\ep})+Y)}\hskip-10pt v_{\eps}(t,z) \, dz,\quad \hbox{ a.e. }(t,x)\in (0,T)\times\Om,
    \end{equation*}
    which can be easily seen to satisfy (see e.g. \cite{Cas})
    \beq\label{convme} m_\ep\rightharpoonup v\ \hbox{ in }L^2(0,T;L^2(\RR^3)).
    \eeq
    Then, for
    \beq\label{dehwe}\hat w_\ep=\hat v_\ep-m_\ep,\eeq
    we have
    \beq\label{eculc1}D_y\hat v_\ep=D_y\hat w_\ep\ \hbox{ a.e. in }(0,T)\times\RR^3\times (\ep Y/r_\ep),\eeq
    \beq\label{eculc2}{r_\ep\over \ep^3}\int_0^T\int_{\RR^3}\int_{{\ep\over r_\ep}Y} |D_y\hat w_\ep|^2dydxdt=\int_0^T\into|D_xv_\ep|^2dxdt\leq C,
    \eeq
    and then, by the Sobolev-Wirtinger inequality
    $${r_\ep\over \ep^3}\int_0^T\int_{\RR^3}\Big(\int_{{\ep\over r_\ep}Y} |\hat w_\ep|^6dy\Big)^{1\over 3}dxdt\leq {r_\ep\over \ep^3}\int_0^T\int_{\RR^3}\int_{{\ep\over r_\ep}Y} |D_y\hat w_\ep|^2dydxdt\leq C.
    $$
    By (\ref{hilecp3}) we also have
    $$\partial_t\hat u_\ep=\hat v_\ep \ \hbox{ in }(0,T)\times \Om\times \partial{\cal O}.$$
    Therefore, for a subsequence of $\ep$, we have:\par\medskip
    If $\gamma=\infty$, then
    $$\hat v_\ep\rightharpoonup v\ \hbox{ in }L^2(0,T; L^2(\Om;H^1(B_R)))^3,\ \ \forall R>0,\qquad \partial_t\hat u=v\ \hbox{ on }(0,T)\times \Om\times \partial{\cal O}.$$\par
    If $\gamma\in (0,\infty)$, then there exists $\hat v:(0,T)\times \Om\times\RR^3\to\RR^3$ such that
    \beq\label{conhv}\hat v_\ep\rightharpoonup \hat v\ \hbox{ in }L^2(0,T; L^2(\Om;H^1(B_R)))^3,\ \  \forall R>0,\quad \partial_t\hat u=\hat v\\ \hbox{ on }(0,T)\times \Om\times \partial{\cal O},\eeq
    \beq\label{hvmv}\hat v-v\in L^2(0,T; L^2(\Om;D^{1,2}(\RR^3))^3.\eeq
    Since ${\rm div}_y\hat v_\ep(t,x,y)=0$ for $y\not\in{\cal O}$, it is also clear that (\ref{diyhv0}) is satisfied.\end{proof}
\begin{theorem}
    \label{thm:co2e} Assume
    \beq\label{cove3r} \exists \lim_{\ep\to 0}{r_\ep\over\ep^3}=\gamma\in [0,\infty],
    \eeq
    and  that the right-hand sides and the initial data in (\ref{Peps}) are such that there exist $f\in L^1(0,T;L^2(\Om))^3$, $\hat u^0\in L^2(\Om;H^1({\cal O}))^3$, $\hat u^1\in L^2(\Om;L^2({\cal O}))^3$ and $v^0\in L^2(\Om)^3$ such that defining  $$\hat u^0_\ep(x,y)=u_\ep^0\Big(\ep\kappa\Big({x\over\ep}\Big)+r_\ep y\Big),\ \hat u^1_\ep(x,y)=u_\ep^1\Big(\ep\kappa\Big({x\over\ep}\Big)+r_\ep y\Big),\quad\hbox{a.e. }(x,y)\in \Om\times {\cal O},$$
    we have
    \beq\label{convfe} f_\ep\rightharpoonup f\ \hbox{ in }L^1(0,T; L^2(\Om))^3,\eeq
    \beq\label{convu0e} \hat u^0_\ep\rightharpoonup \hat u^0\ \hbox{ in }L^2(\Om;H^1({\cal O}))^3,\eeq
    \beq\label{convu1e} \hat u^1_\ep\rightharpoonup \hat u^1\ \hbox{ in }L^2(\Om;L^2({\cal O}))^3,\eeq
    \beq\label{convv0e} v^0_\ep\rightharpoonup v^0\ \hbox{ in }L^2(\Om)^3.\eeq
    Then, there exist $\hat u\in W^{1,\infty}(0,T;L^2(\Om;L^2({\cal O})))^3\cap L^\infty(0,T;L^2(\Om;H^1({\cal O}))^3$, with
    \beq\label{coninhu}\hat u(0,x,y)=\hat u^0(x,y),\ \hbox{ a.e. }(x,y)\in \Om\times{\cal O},\eeq
    $v\in W^{1,\infty}(0,T;L^2(\Om))^3\cap L^2(0,T;H^1_0(\Om))^3,$ and $p\in W^{-1,\infty}(0,T;L^2(\Om))$ such that the solutions $u_\ep$, $v_\ep$, $p_\ep$ of (\ref{Peps}) satisfy (\ref{codiv0}), (\ref{codehue}), (\ref{codeve}),  and
    \medskip
    \beq\label{convpr} \nabla_x p_\ep\rightharpoonup \nabla_xp\ \hbox{ in }W^{-1,\infty}(0,T;H^{-1}(\Om))^3,\eeq
    where $\hat u_\ep$ is defined by (\ref{defhue}).\par
    The functions $\hat u$, $v$, $p$ are characterized by ($p$ is defined up to a distribution in $W^{-1,\infty}(0,T)$):
    \par\medskip
    If $\gamma=0$,
    \beq\label{pblim2eg0a}\left\{\ba{ll}\dis \rho^s\partial^2_{tt}\hat u-{\rm div}_y(Ae_y(\hat u))=0&\dis \hbox{in }(0,T)\times \Om\times {\cal O},\\ \ecart\dis\partial_t\int_{\partial{\cal O}}\hat u\,ds(y)=0,\  Ae_y(\hat u)\nu=c(t,x)\nu&\dis\hbox{on }(0,T)\times\Om\times \partial{\cal O},\\ \ecart\dis \hat u_{|t=0}=\hat u^0,\ \partial_t\hat u_{|t=0}=\hat u^1&\dis \hbox{in }\Om\times {\cal O},\ea\right.\eeq
    \beq\label{pblim2eg0b}\left\{\ba{ll}\dis\rho^l\partial_t v-2\mu{\rm div}_xe_x(v)+\nabla_xp=f& \hbox{in }(0,T)\times\Om,\\ \ecart\dis
    {\rm div}_xv=0&\dis \hbox{in }(0,T)\times\Om,\\ \ecart\dis
    v=0&\dis \hbox{on }(0,T)\times\partial\Om,\\ \ecart\dis
    v_{|t=0}=v^0&\dis\hbox{in }\Om.\ea\right.\eeq
    \par\medskip
    If $\gamma=\infty$, then
    \beq\label{pblim2egin}\left\{\ba{ll}\dis \rho^s\partial^2_{tt}\int_{\cal O}\hat u\,dy+\rho^l\partial_tv-2\mu\,{\rm div}_x\big(e_x(v)\big)+\nabla_x p=f&\dis \hbox{in }(0,T)\times\Om,\\ \ecart\dis
    {\rm div}_xv=0&\dis\hbox{in }(0,T)\times\Om,\\ \ecart\dis
    \rho^s\partial^2_{tt}\hat u-{\rm div}_y(Ae_y(\hat u))=0&\hbox{in }(0,T)\times \Om\times {\cal O},\\ \ecart\dis
    \partial_t\hat u=v& \dis \hbox{on }(0,T)\times\Om\times \partial{\cal O}\\ \ecart\dis  v=0 &\dis\hbox{on }(0,T)\times\partial\Om,\\ \ecart\dis
    v_{|t=0}=v^0,\quad \hat u_{|t=0}=\hat u^0,\quad  \partial_t\hat u_{|t=0}=\hat u^1&\dis \hbox{in }\Om\times {\cal O}.\ea\right.\eeq\par
    If $\gamma\in (0,\infty)$, there also exist $\hat v$, with $\hat v-v\in L^2(0,T; L^2(\Om;D^{1,2}(\RR^3\setminus{\cal O})))^3$, and $\hat p\in W^{-1,\infty}(0,T;L^2(\Om;L^2(\RR^3\setminus{\cal O})))$ such that defining $\hat v_\ep$ by (\ref{defhve}) and $\hat p_\ep$ by
    \beq\label{defhpe}\hat p_\ep(t,x,y)=p_\ep\Big(t,\ep\kappa\big({x\over\ep}\big)+r_\ep y\Big)\ \hbox{ in }(0,T)\times\Om\times\big(\ep Y/r_\ep\big),\eeq
    we have (\ref{conhv}), (\ref{hvmv}) and
    \beq\label{conhpe} \nabla_y\hat p_\ep\rightharpoonup\nabla_y\hat p\ \hbox{ in }W^{-1,\infty}(0,T;L^2(\Om;L^2(B(0,R)\setminus{\cal O})))^3.\eeq
    with $\hat u,v,p,\hat v,\hat p$ defined by
    \beq\label{pblim2eg0in}\left\{\ba{ll}\dis \rho^s\partial^2_{tt}\int_{\cal O}\hat u\,dy+\rho^l\partial_tv-2\mu\,{\rm div}_x e_x(v)+\nabla_x p=f&\dis\hbox{in }(0,T)\times\Om,\\ \ecart\dis
    {\rm div}_xv=0&\dis \hbox{in }(0,T)\times\Om,\\ \ecart\dis
    \rho^s\partial^2_{tt}\hat u-{\rm div}_y(Ae_y(\hat u))=0 &\hbox{in }(0,T)\times \Om\times {\cal O},\\ \ecart\dis
    -2\mu\,{\rm div}_y e_y(\hat v)+\nabla_y\hat p=0 &\hbox{ in }(0,T)\times\Om\times (\RR^3\setminus{\cal O}),\\ \ecart\dis
    {\rm div}_y\hat v=0 &\hbox{in }(0,T)\times\Om\times (\RR^3\setminus{\cal O}),\\ \ecart\dis
    v=0&\dis\hbox{on }(0,T)\times\partial\Om,\\ \ecart\dis
    \partial_t\hat u=\hat v,\ Ae_y(\hat u)\nu=\gamma\big(2\mu e_y(\hat v)-\hat pI\big)\nu&\dis\hbox{on }(0,T)\times\Om\times \partial{\cal O},\\ \ecart\dis
    v_{|t=0}=v^0,\ \ \hat u_{|t=0}=u^0,\ \ \partial_t\hat u_{|t=0}=\hat u^1& \dis\hbox{in }\Om\times {\cal O}.\ea\right.\eeq
    \par\medskip
\end{theorem}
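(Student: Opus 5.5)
The plan is to pass to the limit in the variational formulation (\ref{ecuvaSol}) of Remark \ref{EDPPe}, using oscillating test functions adapted to each regime.

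\textbf{Compactness.} By (\ref{acodatos}) and (\ref{espso}), Lemma \ref{lemcomuf} applies to the extended velocities $\mathcal E_\ep v_\ep$. It yields $\hat u$, $v$ and, for $\gamma\in(0,\infty)$, $\hat v$, together with (\ref{codiv0}), (\ref{codehue}), (\ref{codeve}), (\ref{conhv}), (\ref{hvmv}) and (\ref{dhuhv}). The bound on $\nabla_xp_\ep$ in (\ref{espso}) gives (\ref{convpr}). For $\gamma\in(0,\infty)$ I will unfold $P_\ep$ and use $(1+r_\ep/\ep^3)\|p_\ep\|^2$ bounded, together with the local version of Remark \ref{Re2PriEst} (Allaire's restriction operator), to get (\ref{conhpe}). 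The initial condition (\ref{coninhu}) follows from (\ref{convu0e}) and the $W^{1,\infty}$-in-time weak-$\ast$ convergence of $\hat u_\ep$.

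\textbf{Test functions.} I first take $\varphi_\ep(t,x)=\psi(t)\,\phi(x)\,r_\ep^{-?}\cdots$; concretely, $\varphi_\ep(t,x)=r_\ep\,\psi(t)\phi(x)\,\theta\big((x-\ep\kappa(x/\ep))/r_\ep\big)$ with $\theta\in H^1_0(\mathcal O)^3$ (extended by $0$). Inserting it in (\ref{ecuvaSol}), after unfolding and dividing by $\ep^3/r_\ep^{2}\cdot r_\ep^{3}/\ep^3$, leaves only the solid terms:
\[
-\rho^s\!\int\!\partial_t\hat u\cdot\partial_t(\psi\phi\theta)+\int\! Ae_y(\hat u):e_y(\theta)\psi\phi=0 .
\]
The fluid, pressure and force terms vanish because $|\Om_\ep^s|\to0$ and the pressure term is $\int P_\ep\,\mathrm{div}\,\partial_t\varphi_\ep$, which I bound by (\ref{proPeps4}) after noticing that $P_\ep=\zeta_\ep$ on the support of $\varphi_\ep$ and $\int \mathrm{div}\,\theta=0$. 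This gives the elastodynamic equation in $\mathcal O$.

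For the macroscopic equation I take $\varphi(t,x)$ smooth with $\mathrm{div}_x\varphi=0$, compactly supported. The solid inertia term, after unfolding, is
\[
\frac{\ep^3}{r_\ep^3}\int_{\Om_\ep^s}\partial_tu_\ep\cdot\partial_t\varphi=\int\!\!\int_{\mathcal O}\partial_t\hat u_\ep\cdot\partial_t\varphi+O_\ep\;\to\;\int\!\!\int_{\mathcal O}\partial_t\hat u\cdot\partial_t\varphi .
\]
The elastic term is $O\big((\ep^3/r_\ep)^{1/2}r_\ep^{3/2}\ep^{-3/2}\big)\to0$, and the pressure term disappears since $\mathrm{div}\,\varphi=0$ everywhere.

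\textbf{Regimes.} For $\gamma=\infty$ this already gives the first line of (\ref{pblim2egin}), and (\ref{vainre}) together with (\ref{dhuhv}) gives $\partial_t\hat u=v$ on $\partial\mathcal O$. For $\gamma=0$ the solid inertia still appears a priori. The key point is that $\hat u$ is then only constrained by (\ref{confhu}), so the coupling must vanish. I would split $\varphi$ near each inclusion with a capacity-type cutoff on a ball of radius $\delta\ep\gg r_\ep$ (cost $\sim r_\ep/\ep^3\to0$) to decouple. Redoing the solid test with $\theta\in H^1(\mathcal O)^3$ having $\int_{\partial\mathcal O}\theta\cdot\nu=0$, extended divergence-free into the fluid with the same small capacity, gives the Neumann condition $Ae_y(\hat u)\nu=c\nu$.

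For $\gamma\in(0,\infty)$ I test with $\varphi_\ep=\psi(t)\phi(x)\Theta\big((x-\ep\kappa(x/\ep))/r_\ep\big)$, where $\Theta\in H^1(\RR^3)^3$ has compact support. Fluid viscous terms scale as $r_\ep/\ep^3\to\gamma$ after unfolding, and elastic terms scale as $1$. This produces the local Stokes problem in $\RR^3\setminus\mathcal O$ with the interface condition $Ae_y(\hat u)\nu=\gamma(2\mu e_y(\hat v)-\hat pI)\nu$. Adding the macroscopic test function gives the first line of (\ref{pblim2eg0in}).

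\textbf{Conclusion and main obstacle.} Uniqueness of the limit systems, which follows from energy identities analogous to (\ref{EnergyIdentityFSI}), will give convergence of the whole sequence. The hardest step is the pressure term $\int P_\ep\,\mathrm{div}_x\partial_t\varphi_\ep$ with microscopic test functions, in particular for $\gamma=0$ where $\zeta_\ep$ may blow up. I will handle it by always choosing test functions whose divergence integrates to zero on each inclusion and is supported in the fluid only where Remark \ref{Re2PriEst} controls $Q_\ep$.
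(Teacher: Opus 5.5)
Your plan is essentially the paper's argument, split into separate families of test functions. The paper passes to the limit in (\ref{ecuvaSol}) with a single family $\Phi_\ep(t,x)=\phi(t,x,(x-\ep k)/r_\ep)$, where $\phi=\varphi$ for $|y|>R$ (outside ${\cal O}$ if $\gamma=\infty$) and $\int_{\partial{\cal O}}\phi\cdot\nu\,ds(y)=0$. These span exactly your three families: solid-supported correctors $\theta\in H^1_0({\cal O})^3$, a macroscopic $\varphi$, and compactly supported zero-flux correctors $\Theta$. The $\gamma=0$ decoupling rests on the same cost $r_\ep/\ep^3$; the paper makes the transition at scale $Rr_\ep$, while your capacitary cutoff at scale $\delta\ep$ costs the same order. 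The pressure is handled with the same devices: zero flux through each inclusion to remove $\zeta_\ep$, $Q_\ep$ from Remark \ref{Re2PriEst} when $\gamma=0$, and $r_\ep\hat P_\ep$ when $\gamma\in(0,\infty)$.

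\textbf{The missing step is the identification of the limit pressure.}
\begin{itemize}
\item You take the macroscopic test functions with ${\rm div}_x\varphi=0$ so that the pressure term disappears. You then obtain (\ref{convpr}) only from the bound in (\ref{espso}).
\item This produces two unrelated objects: some weak-$\ast$ limit of $\nabla_xp_\ep$, and a de Rham pressure for the limit systems (\ref{pblim2eg0b}), (\ref{pblim2egin}), (\ref{pblim2eg0in}).
\item The theorem asserts that one and the same $p$ plays both roles, and nothing in your plan links them.
\end{itemize}

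To close this, pass to the limit in $\int_0^T\!\!\int_\Om P_\ep\,{\rm div}_x\partial_t\Phi_\ep\,dxdt$ with non-solenoidal $\varphi$, as the paper does.
\begin{itemize}
\item For $\gamma\in(0,\infty]$: (\ref{proPeps4}) bounds $P_\ep$ in $L^\infty(0,T;L^2(\Om))$, so $P_\ep\stackrel{\ast}\rightharpoonup P$ and you may take $p=\partial_tP$.
\item For $\gamma=0$: that bound degenerates, since only $|\zeta_\ep|\le C\ep^3/r_\ep^2$ is available. Use instead a test function with zero flux through each inclusion. Your $\varphi(1-\chi_\ep)$, with $\varphi$ no longer divergence-free, works: it vanishes on $\Om_\ep^s$, and its divergence converges strongly to ${\rm div}_x\varphi$ in $L^2$ because $\|\nabla\chi_\ep\|_{L^2}\to0$.
\item Write $P_\ep=Q_\ep+\eta_\ep$ in $\Om_\ep^l$ by (\ref{relPQ3}). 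The $\zeta_\ep$- and $\eta_\ep$-contributions vanish exactly, because the divergence of this test function integrates to zero over $\Om_\ep^s$ and over $\Om_\ep^l$.
\item Pass to the limit with $Q_\ep\stackrel{\ast}\rightharpoonup Q$ from (\ref{relPQ2}) and set $p=\partial_tQ$.
\end{itemize}
With this modification, your argument covers the full statement, including (\ref{convpr}).
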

\begin{remark} Reasoning similarly to Lemma \ref{lemcomuf} it can be proved that (\ref{acodatos}) implies that, for a subsequence of $\ep$, there exist $\hat u^0\in L^2(\Om;H^1({\cal O}))^3$, $\hat u^1\in L^2(\Om;L^2({\cal O}))^3$ and $v^0\in L^2(\Om)^3$ such that (\ref{convu0e}), (\ref{convu1e}), (\ref{convv0e}) hold. Therefore, these assumptions are not restrictive.\par
    We also remark that due to ${\rm div}_xv^0_\ep=0$ in $\Om_\ep^l$,  we have
    \beq\label{divvo00} {\rm div}_xv^0=0\ \hbox{ in }\Om.\eeq
    \par
    The existence of $f\in L^1(0,T;L^2(\Om))^3$ satisfying (\ref{convfe}) for a subsequence of $\ep$,  also follows from (\ref{acodatos}) and (\ref{equiinfe}). \end{remark}

\begin{proof}[Proof of Theorem \ref{thm:co2e}] Thanks to (\ref{espso}), we can apply the compactness Lemma \ref{lemcomuf} to deduce the existence of  $\hat u$, $v$ and $\hat v$ the functions given by this lemma, which exist for a subsequence.\par
    By (\ref{codehue}), the function $\hat u^0$ satisfies
    \beq\label{cihu0} \hat u(0,x,y)=\hat u^0(x,y)\ \hbox{ a.e. in }\Om\times{\cal O}.\eeq
    \par\medskip
    Let us consider $\varphi\in C^1([0,T]; C^1_c(\Om))^3$ and $\phi\in C^1([0,T]\times \overline\Om\times \RR^3)^3$, such that
    \beq\label{prtho1} \varphi(T,x)=\phi(T,x,y)=0,\ \ \forall\,(x,y)\in \Om\times\RR^3,\eeq
    \beq\label{prtho3} \int_{\partial{\cal O}}\phi\cdot\nu\,ds(y)=0\ \hbox{ in } [0,T]\times \Om.
    \eeq
    \beq\label{prtho2}\left\{\ba{l}\dis\hbox{If }\gamma=\infty\Rightarrow \phi(t,x,y)=\varphi(t,x),\  \forall \,(t,x,y)\in [0,T]\times \Om\times (\RR^3\setminus{\cal O}), \\ \ecart\dis \hbox{If }\gamma<\infty\Rightarrow \exists\, R>0\ \hbox{ s.t. }\phi(t,x,y)=\varphi(t,x),\ \forall \,(t,x,y)\in [0,T]\times \Om\times \{|y|>R\}.\ea\right.
    \eeq
    For such $\varphi$ and $\phi$, we define $\Phi_\ep:[0,T]\times\Om\to\RR$ as
    $$\Phi_\ep(t,x) = \phi\Big(t,x,{x-\ep k\over r_\ep}\Big)\ \hbox{ a.e. in }(0,T)\times \ep(k+Y),\ \forall\,k\in\ZZ^3.$$
    Using $\Phi_\ep$ as test function in (\ref{ecuvaSol})  we get
    \beq \label{WeakForm} \ba{l}\dis        -{\ep^3\over r_\ep^3} \rho^s\int_0^T \int_{\Omega_{\eps}^s}  \partial_t u_{\eps} \cdot \partial_t \Phi_{\eps} \, dx dt +{\ep^3\over r_\ep} \int_0^T \int_{\Omega_{\eps}^s} A e_x(u_{\eps}) : e_x(\Phi_{\eps}) \, dx dt \\ \ecart\dis
    - \rho^l \int_0^T \int_{\Omega_{\eps}^l} v_{\eps} \cdot \partial_t \Phi_{\eps} \, dxdt + 2\mu \int_0^T \int_{\Omega_{\eps}^l} e_x(v_{\eps}) : e_x(\Phi_{\eps}) \, dxdt \\ \ecart\dis
    + \int_0^T \int_{\Omega} P_{\eps} \, \divg_x \partial_t\Phi_{\eps}\, dxdt
    = \int_0^T \int_{\Omega} f_{\eps} \cdot \Phi_{\eps} \, dxdt  \\ \ecart\dis
    +{\ep^3\over r_\ep^3}\rho^s \int_{\Omega_{\eps}^s}  u_{\eps}^1 \cdot \Phi_{\eps}(0,x) \, dx + \rho^l \int_{\Omega_{\eps}^l} v_{\eps}^0 \cdot \Phi_{\eps}(0,x) dx.\ea\eeq
    \par\medskip
    Let us  pass to the limit in each of the terms that appear in \eqref{WeakForm}.

    \par   \medskip\noindent
    {\it Terms 1 and 2.} Performing the change of variable $y = (x-\ep k)/r_\ep$ in each solid inclusion $\ep k+r_\ep {\cal O}$, $k\in\ZZ^3$,  and using (\ref{codehue}), we get
    \begin{equation}\label{t1g0}\ba{l}\dis
        {\ep^3\over r_\ep^3} \int_0^T\hskip-2pt \int_{\Omega_{\eps}^s}  \partial_t u_{\eps} \cdot \partial_t \Phi_{\eps} \, dx dt
        =\int_0^T\hskip-2pt  \into \int_{\cal O} \partial_t \hat{u}_{\eps}  \cdot \partial_t\phi\,dydxdt+O_\ep\\ \ecart\dis\to \int_0^T\hskip-2pt  \into \int_{\cal O} \partial_t \hat{u} \cdot\partial_t \phi\,dydxdt.\ea
    \end{equation}
    \beq\label{t2g0} \ba{l}\dis {\ep^3\over r_\ep} \int_0^T \int_{\Omega_{\eps}^s} A e_x(u_{\eps}) : e_x(\Phi_{\eps}) \, dx dt=\int_0^T \into\int_{\cal O} A e_y(\hat u_{\eps}) : e_y(\phi) \, dydx dt+O_\ep\\ \ecart\dis \to\int_0^T \into\int_{\cal O} A e_y(\hat u) : e_y(\phi) \, dydxdt.\ea\eeq\par\medskip\noindent
    {\it Term 3.} By (\ref{prtho2})  we have that $\Phi_\ep(t,x)=\varphi(t,x)$ except in a subset of $\Om$, independent of $t\in (0,T)$, whose measure tends to zero with $\ep$. Thus, using the Lebesgue dominated convergence theorem, we get
    \beq\label{coPhieco} \Phi_\ep\to \varphi\ \hbox{ in }H^1(0,T;L^2(\Om))^3.\eeq
    By (\ref{codeve}), we then have
    \beq\label{t3g0}  \int_0^T \int_{\Omega_{\eps}^l} v_{\eps} \cdot \partial_t \Phi_{\eps} \, dxdt\to \int_0^T \into v \cdot \partial_t \varphi \, dxdt.\eeq\par\medskip\noindent
    {\it Term 4.} If $\gamma=\infty$, then by (\ref{prtho2}), $\Phi_\ep=\varphi$ in $(0,T)\times \Om_\ep^l$ and thus, taking into account that $|\Om_\ep^s|\to 0$, we get
    \beq\label{t4g0}\int_0^T \int_{\Omega_{\eps}^l} e_x(v_{\eps}) : e_x(\Phi_{\eps}) \, dxdt\to \int_0^T\into e_x(v):e_x(\varphi)\,dxdt.\eeq
    For $\gamma<\infty$, taking into account (\ref{prtho2}), we also have for every $k\in \ZZ^3$
    \beq\label{expep}\ba{ll}\dis D_x\Phi_\ep &\dis=\Big(D_x\phi\Big(t,x,{x-\ep k\over r_\ep}\Big)+
    {1\over r_\ep}D_y\phi\Big(t,x,{x-\ep k\over r_\ep}\Big)\Big)\chi_{\{|x-\ep k|<Rr_\ep\}}\\ \ecart &\dis +D_x\varphi\chi_{\{|x-\ep k|>Rr_\ep\}}\ \hbox{ a.e. in }\ep(k+Y).\ea\eeq
    If $\gamma=0$, then
    \beq\label{estdpg0}{1\over r_\ep^2}\sum_{k\in\ZZ^2}\int_{C_k^\ep\cap\Om_\ep^l}|D_y\Phi\Big({x-\ep k\over r_\ep}\Big)\Big|^2dx\leq C{r_\ep\over \ep^3}\to 0,\eeq
    and therefore, similarly to (\ref{coPhieco}) we have
    $$D_x\Phi_\ep\to D_x\varphi\ \hbox{ in }L^2(0,T;L^2(\Om))^{3\times 3},$$
    which implies that (\ref{t4g0}) also holds true for $\gamma=0$.\par
    In the case $\gamma\in (0,\infty),$ taking into account (\ref{expep}), performing the change of variables $y=(x-\ep k)/r_\ep$ in each cube $\ep(k+Y)$, and using  (\ref{prtho2}), (\ref{codeve}) and (\ref{codehve}), we get
    \beq\label{t3ge0i}  \ba{l}\dis \int_0^T \int_{\Omega_{\eps}^l} e_x(v_{\eps}) : e_x(\Phi_{\eps}) \, dxdt\\ \ecart\dis =\sum_{k\in\ZZ^3}\int_0^T \int_{\Om_\ep^l\cap(\ep k+B_{R r_\ep}) } \hskip-30pt e_x(v_{\eps}): e_x(\Phi_\ep)\, dxdt+\sum_{k\in\ZZ^3}\int_0^T \int_{\Om_\ep^l\cap(C_k^\ep\setminus(\ep k+B_{R r_\ep}))} \hskip-30pt e_x(v_{\eps}): e_x(\Phi_\ep)\, dxdt\\ \ecart\dis ={r_\ep\over \ep^3}\int_0^T\int_{\Om\times (B_R\setminus {\cal O})}e_y(\hat v_\ep):e_y(\phi)\,dxdydt+\int_0^T\int_{\Om_\ep^l}e_x(v_\ep):e_x(\varphi)\,dxdt+O_\ep\\ \ecart\dis \to \gamma\int_0^T\int_{\Om\times (B_R\setminus {\cal O})}\hskip-8pt e_y(\hat v):e_y(\phi)\,dxdydt+\int_0^T\into e_x(v):e_x(\varphi)\,dxdt.\ea\eeq\par\medskip\noindent
    {\it Term 5.} If $\gamma=0$, using (\ref{proPeps2}), (\ref{relPQ3}) and (\ref{prtho3})  we have
    $$ \int_0^T \int_{\Omega} P_{\eps} \, \divg_x \partial_t\Phi_{\eps}\, dxdt =\int_0^T \int_{\Omega_\ep^l} Q_{\eps} \, \divg_x \partial_t\Phi_{\eps}\, dxdt
    $$
    where thanks to (\ref{relPQ2}), and $\gamma=0$, we can assume the existence of $Q\in L^\infty(0,T;L^2(\Om))$ with null integral in $\Om$ such that
    \beq\label{copreg0} Q_\ep\stackrel{\ast}\rightharpoonup Q\ \hbox{ in }L^\infty(0,T;L^2(\Om)).\eeq
    Using then (\ref{expep}) and (\ref{estdpg0}), applied to $\partial_t\Phi_\ep$ instead of $\Phi_\ep$, we get
    \beq\label{t5g0} \int_0^T \int_{\Omega} P_{\eps} \, \divg_x \partial_t\Phi_{\eps}\, dxdt \to \int_0^T \int_{\Omega} Q \, \divg_x \partial_t\varphi\, dxdt.
    \eeq\par
    If $\gamma>0$,  (\ref{proPeps2}) and (\ref{prtho3}) imply

    $$ \int_0^T \int_{\Omega} P_{\eps} \, \divg_x \partial_t\Phi_{\eps}\, dxdt =\int_0^T \int_{\Omega_\ep^l} P_{\eps} \, \divg_x \partial_t\Phi_{\eps}\, dxdt,
    $$
    where thanks to (\ref{proPeps4}), and extracting a subsequence if necessary, we can assume the existence of $P\in L^\infty(0,T;L^2(\Om))$ with null integral in $\Om$ such that
    \beq\label{conPe} P_\ep\stackrel{\ast}\rightharpoonup P\ \hbox{ in }L^\infty(0,T;L^2(\Om)).\eeq\par
    In the case $\gamma=\infty$, due to (\ref{prtho2}), we then have
    \beq\label{t5gin}\int_0^T \int_{\Omega} P_{\eps} \, \divg_x \partial_t\Phi_{\eps}\, dxdt\to \int_0^T \int_{\Omega} P \, \divg_x \partial_t\varphi\, dxdt.\eeq
    \par
    In the case $\gamma\in (0,\infty)$, taking into account that the sequence $\hat P_\ep$ defined as
    \beq\label{dehP} \hat P_\ep(t,x,y)=P_\ep\Big(t,\ep\kappa\Big({x\over\ep}\Big)+r_\ep y\Big)\ \hbox{ a.e. }(t,x,y)\in (0,T)\times\Om\times{\ep\over r_\ep}Y,\eeq
    satisfies
    $${r_\ep^3\over\ep^3}\int_{\Om\times Y/r_\ep}|\hat P_\ep|^2dxdy=\into |P_\ep|^2dx,\ \hbox{ a.e. in }(0,T),$$
    we can assume the existence of $\hat P\in L^\infty(0,T;L^2(\Om\times\RR^3))$ such that

    \beq\label{conhPe} r_\ep\hat P_\ep\chi_{\ep Y/r_\ep}\stackrel{\ast}\rightharpoonup \hat P\ \hbox{ in }L^\infty(0,T;L^2(\Om;L^2(\RR^3))).\eeq
    Then, reasoning similarly to the fourth term, we get
    \beq\label{t5ge0in}\ba{l}\dis \int_0^T \int_{\Omega} P_{\eps} \, \divg_x \partial_t\Phi_{\eps}\, dxdt\\ \ecart\dis \to\gamma\int_0^T \int_{\Omega\times ( B_R\setminus {\cal O})}\hskip-12pt \hat P \, \divg_y \partial_t\phi\, dxdydt+ \int_0^T \into P \, \divg_x \partial_t\varphi\, dxdt.\ea\eeq
    \par\medskip \noindent
    {\it Terms 6, 7, 8.} By (\ref{convfe}), (\ref{convu1e}) and (\ref{convv0e}), we have
    \beq\label{t6pl}  \int_0^T \int_{\Omega} f_{\eps} \cdot \Phi_{\eps} \, dxdt  \to \int_0^T\into f\cdot\varphi\,dxdt,\eeq
    \beq\label{t7pl}  {\ep^3\over r_\ep^3} \int_{\Omega_{\eps}^s}  u_{\eps}^1 \cdot \Phi_{\eps}(0,x) \, dx   \to \int_{\Om\times\cal O}\hat u^1\cdot\phi(0,x,y)\,dydx,\eeq
    \beq\label{t8pl}  \int_0^T \int_{\Omega} v^0_{\eps} \cdot \Phi_{\eps} (0,x)\, dxdt  \to  \into v^0\cdot\varphi(0,x)\,dx.\eeq
    \par\medskip
    Reasoning by density, we have then proved that the functions $\hat u,v$ and $\hat v$ satisfy the variational problems\par\medskip
    If $\gamma=0$
    \beq\label{pbvg0}\left\{\ba{l}\dis \hat u\in W^{1,\infty}(0,T;L^2(\Om;L^2({\cal O})))^3\cap L^\infty(0,T;L^2(\Om;H^1({\cal O})))^3, \\ \ecart\dis v\in L^\infty(0,T;L^2(\Om))^3\cap L^2(0,T;H^1_0(\Om))^3,\ Q\in L^\infty(0,T;L^2(\Om))\\ \ecart\dis {\rm div}_xv=0\hbox{ in }(0,T)\times\Om,\ \
    \hat u_{|t=0}=\hat u^0\ \hbox{ in }\Om\times {\cal O},\ \partial_t\hskip-2pt\int_{\partial {\cal O}}\hskip-6pt\hat u\cdot\nu ds(y)\hskip-2pt=\hskip-2pt 0\\ \ecart\dis
    -\rho^s\int_0^T\hskip-2pt  \into \int_{\cal O} \partial_t \hat{u} \cdot\partial_t \phi\,dydxdt+\int_0^T \into\int_{\cal O} A e_y(\hat u) : e_y(\phi) \, dydxdt\\ \ecart\dis -\rho^l  \int_0^T \into v \cdot \partial_t \varphi \, dxdt+2\mu  \int_0^T\into e_x(v):e_x(\varphi)\,dxdt \\ \ecart\dis+ \int_0^T \int_{\Omega} Q \, \divg_x \partial_t\varphi\, dxdt=\int_0^Tf\cdot\varphi\,dxdt\\ \ecart\dis
    +\rho^s\int_{\Om}\int_{\cal O}u^1\cdot\phi(0,x,y)\,dydx+\rho^l\into v^0\cdot\varphi(0,x)\,dx,\\ \ecart\dis
    \forall(\phi,\varphi)\in W^{1,1}(0,T; L^2(\Om;H^1({\cal O})))^3\times W^{1,1}(0,T; H^1(\Om))^3\\ \ecart\dis \phi(T,x,y)=\varphi(T,x)=0\hbox{ in }\Om\times{\cal O},\ \partial_t\hskip-3pt\int_{\partial {\cal O}}\hskip-6pt\phi\cdot\nu ds(y)\hskip-2pt=\hskip-2pt 0.\ea\right.\eeq
    If $\gamma=\infty$
    \beq\label{pbvgi}\left\{\ba{l}\dis \hat u\in W^{1,\infty}(0,T;L^2(\Om;L^2({\cal O})))^3\cap L^\infty(0,T;L^2(\Om;H^1({\cal O})))^3,\\ \ecart\dis v\in L^\infty(0,T;L^2(\Om))^3\cap L^2(0,T;H^1_0(\Om))^3,\ P\in L^\infty(0,T;L^2(\Om)),\\ \ecart\dis{\rm div}_xv=0\ \hbox{ in }(0,T)\times\Om,\\ \ecart\dis
    \partial_t\hat u=v\ \hbox{ on }(0,T)\times \Om\times\partial{\cal O}
    ,\ \hat u_{|t=0}=\hat u^0\ \hbox{ in }\Om\times {\cal O},\\ \ecart\dis
    -\rho^s\int_0^T\hskip-2pt  \into \int_{\cal O} \partial_t \hat{u} \cdot\partial_t \phi\,dydxdt+\int_0^T \into\int_{\cal O} A e_y(\hat u) : e_y(\phi) \, dydxdt\\ \ecart\dis -\rho^l  \int_0^T \into v \cdot \partial_t \varphi \, dxdt+2\mu  \int_0^T\into e_x(v):e_x(\varphi)\,dxdt \\ \ecart\dis+ \int_0^T \int_{\Omega} P \, \divg_x \partial_t\varphi\, dxdt = \int_0^T\into f\cdot\varphi\,dxdt \\ \ecart\dis
    +\rho^s\int_{\Om\times\cal O}u^1\cdot\phi(0,x,y)\,dydx+\rho^l\into v^0\cdot\varphi(0,x)\,dx,\\ \ecart\dis
    \forall\,\phi\in W^{1,1}(0,T; L^2(\Om;H^1({\cal O})))^3,\ \forall\,\varphi\in W^{1,1}(0,T; H^1(\Om))^3,\\ \ecart\dis
    \phi(t,x,y)=\varphi(t,x)\hbox{ on }(0,T)\times\Om\times \partial{\cal O},\ \int_{\partial{\cal O}}\hskip-3pt\phi\cdot\nu\,ds(y)=0 \hbox{ in }(0,T)\times\Om,
    \\ \ecart\dis \phi(T,x,y)=\varphi(T,x)=0\hbox{ in }\Om\times{\cal O}.\ea\right.\eeq
    If $\gamma\in(0,\infty)$
    \beq\label{pbvge0i}\left\{\ba{l}\dis\hat u\in W^{1,\infty}(0,T;L^2(\Om;L^2({\cal O})))^3\cap L^\infty(0,T;L^2(\Om;H^1({\cal O})))^3\\ \ecart\dis v\in L^\infty(0,T;L^2(\Om))^3\cap L^2(0,T;H^1_0(\Om))^3,\ \ P\in L^\infty(0,T;L^2(\Om)),\\ \ecart\dis\hat v\in L^2(0,T;L^2(\Om;H^1_{loc}(\RR^3\setminus {\cal O})))^3,\
    \hat P\in L^\infty(0,T;L^2(\Om;D^{1,2}(\RR^3\setminus{\cal O}))),\\ \ecart\dis
    {\rm div}_xv=0\hbox{ in }(0,T)\times\Om,\ {\rm div}_y\hat v=0\hbox{ in }(0,T)\times\Om\times (\RR^3\setminus{\cal O}),\\ \ecart\dis
    \hat u_{|t=0}=\hat u^0\ \hbox{ in }\Om\times {\cal O},\ \partial_t\hat u=v\ \hbox{ on }(0,T)\times \Om\times\partial{\cal O},\\ \ecart\dis
    \hat v-v\in L^2(0,T;L^2(\Om;D^{1,2}(\RR^3\setminus{\cal O}))^3,\ \partial_t\hat u=\hat v\ \hbox{ on }(0,T)\times\Om\times\partial{\cal O},\\ \ecart\dis -\rho^s\int_0^T\hskip-2pt  \into \int_{\cal O} \partial_t \hat{u} \cdot\partial_t \phi\,dydxdt+\int_0^T \into\int_{\cal O} A e_y(\hat u) : e_y(\phi) \, dydxdt\\ \ecart\dis -\rho^l  \int_0^T \into v \cdot \partial_t \varphi \, dxdt+2\mu  \int_0^T\into e_x(v):e_x(\varphi)\,dxdt \\ \ecart\dis+2\mu\gamma\int_0^T\int_{\Om\times(\RR^3\setminus{\cal O})}e_y(\hat v):e_y(\phi)\,dxdydt+ \int_0^T \int_{\Omega} P \, \divg_x \partial_t\varphi\, dxdt\\ \ecart\dis +\gamma\int_0^T\int_{\Om\times (\RR^3\setminus{\cal O})}\hat P{\rm div}_y\partial_t\phi\,dxdydt
    =\int_0^T\into f\cdot\varphi\,dxdt\\ \ecart\dis
    +\rho^s\int_{\Om\times\cal O}u^1\cdot\phi(0,x,y)\,dydx+\rho^l\into v^0\cdot\varphi(0,x)\,dx,\\ \ecart\dis
    \forall\,\phi\in W^{1,1}(0,T; L^2(\Om;H^1_{loc}(\RR^3))^3,\ \forall\,\varphi\in W^{1,1}(0,T; H^1(\Om))^3,\\ \ecart\dis
    \phi-\varphi\in W^{1,1}(0,T; L^2(\Om;D^{1,2}(\RR^3)))^3,\ \int_{\cal O}\phi \cdot\nu ds(y)=0,\\ \ecart\dis
    \phi(T,x,y)=\varphi(T,x)=0\hbox{ in } \Om\times {\cal O}.\ea\right.\eeq
    From these variational systems, taking $p=\partial_tP=\partial_t Q$, and recalling (\ref{divvo00}) we conclude the thesis of the theorem.\end{proof}
Theorem \ref{thm:co2e} provides some weak convergences for the solutions of (\ref{Peps}). The following theorem proves that these convergences hold in fact in the strong sense if the data converges strongly.
\begin{theorem}\label{TeoCo} Assume that the convergences in (\ref{convfe}), (\ref{convu0e}), (\ref{convu1e}) and (\ref{convv0e}) hold in the strong sense. Then in Theorem \ref{thm:co2e} we have
    \beq\label{cofhue}\int_{\Om_\ep^\ast}\int_{\cal O}\big(|\partial_t(\hat u_\ep-\hat u)|^2+|e_y(\hat u_\ep -\hat u)|^2\big)dydx+\int_{\Om_\ep^l}|v_\ep-v|^2dx\to 0\ \hbox{ in }L^\infty(0,T).\eeq
    For $	\gamma=0$ or $\gamma=\infty$, we also have
    \beq\label{confve}\int_0^T\int_{\Om_\ep^l}|e_x(v_\ep-v)|^2dxdt\to 0.\eeq
    In the case $\gamma\in (0,\infty)$, taking $R_\ep>0$ such that
    \beq\label{proRe}{R_\ep\over r_\ep}\to \infty,\quad {\ep\over R_\ep}\to 0,\eeq
    we have
    \beq\label{convhv} \int_0^T\int_{\Om\setminus B(\ep k,R_\ep)}\hskip-0.5cm|e_x(v_\ep-v)|^2dxds+\int_0^T \into\int_{B_{R_\ep/r_\ep}}\hskip-10pt\Big|{r_\ep\over\ep^3}e_y(\hat v_\ep) - \gamma e_y(\hat v)\Big|^2dydxds \to 0. \eeq
\end{theorem}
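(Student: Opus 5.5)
The proof will be an energy method: compare the energy identity of the $\varepsilon$-problem with the energy identity of the limit problem, and then use weak lower semicontinuity to turn the resulting equality into strong convergence.

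\textbf{Step 1: energy identity at level $\varepsilon$.} By Theorem \ref{Thexsol} applied to (\ref{Peps}), integrating (\ref{EnergyIdentityFSI}) on $(0,t)$ gives
$$\frac12\Big(\rho^s\!\int_{\Om\times{\cal O}}\!|\partial_t\hat u_\ep|^2+\!\int_{\Om\times{\cal O}}\!Ae_y(\hat u_\ep):e_y(\hat u_\ep)+\rho^l\!\int_{\Om_\ep^l}\!|v_\ep|^2\Big)(t)+2\mu\!\int_0^t\!\!\int_{\Om_\ep^l}\!|e_x(v_\ep)|^2=\frac12E_\ep(0)+\!\int_0^t\!\!\int_\Om f_\ep\cdot(\ldots).$$
Here the solid terms have been rewritten through the unfolding (\ref{defhue}), up to $O_\ep$.

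\textbf{Step 2: limits of the data terms.} The strong convergence of the data gives $E_\ep(0)\to E(0)$. For the force term, the integral $\int f_\ep\cdot v_\ep$ converges by strong times weak convergence, since $v_\ep$ is bounded in $W^{1,\infty}(0,T;L^2)$ and $H^1$ and hence converges strongly in $L^2$ by Aubin--Lions. The solid part $\frac{r_\ep^3}{\ep^3}\cdot\frac{\ep^3}{r_\ep^3}\int f_\ep\cdot\partial_tu_\ep$ becomes, after unfolding, $\frac{r_\ep^3}{\ep^3}\int\hat f_\ep\,\partial_t\hat u_\ep$. It tends to zero, using the equi-integrability (\ref{equiinfe}) together with $|\Om_\ep^s|\to0$.

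\textbf{Step 3: energy identity of the limit.} I will show that the limit system in Theorem \ref{thm:co2e} satisfies the corresponding identity. For $\gamma\in(0,\infty)$ this is
$$\frac12\Big(\rho^s\|\partial_t\hat u\|^2+\langle Ae_y\hat u,e_y\hat u\rangle+\rho^l\|v\|^2\Big)(t)+2\mu\int_0^t\Big(\|e_x v\|^2+\gamma\|e_y\hat v\|^2_{L^2(\Om\times(\RR^3\setminus{\cal O}))}\Big)=\frac12E(0)+\int_0^t\!\!\int f\cdot v.$$
This is the step I expect to be the main obstacle: the limit variational problems (\ref{pbvgi}) and (\ref{pbvge0i}) only give weak solutions, so $(\partial_t\hat u,v,\hat v)$ is not directly an admissible test function. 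I would handle it as in Theorem \ref{Thexsol}, by showing that the limit problem is itself generated by a contraction semigroup. For $\gamma=\infty$ the state space is $(\hat u,\partial_t\hat u,v)$ with the coupling $\partial_t\hat u=v$ on $\partial{\cal O}$. For $\gamma<\infty$ the exterior Stokes field $\hat v$ is eliminated as a Dirichlet-to-Neumann operator. Uniqueness then identifies the weak limit with the semigroup solution, and the semigroup solution satisfies the identity by regularization and density.

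\textbf{Step 4: strong convergence.} Subtracting the two identities and using weak lower semicontinuity of every term, I expect to get, for every $t$, convergence of the energies at time $t$ and of the dissipations. Lower semicontinuity for the interface term uses (\ref{t3ge0i}) and (\ref{codehve}). Together with the weak convergences (\ref{codehue}), (\ref{codeve}), (\ref{conhv}), this yields the norm convergences in (\ref{cofhue}) pointwise in $t$. The convergence is uniform in $t$ because the energies are equicontinuous in time, which follows from (\ref{ecuEP1}) and (\ref{equiinfe}).

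For $\gamma\in\{0,\infty\}$ the convergence of the dissipation gives (\ref{confve}). For $\gamma=0$ I first check that the interface dissipation vanishes; for $\gamma=\infty$ I check that no boundary-layer energy is lost, since $\hat v=v$ by (\ref{vainre}).

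For $\gamma\in(0,\infty)$, I split $\Om_\ep^l$ into the balls $B(\ep k,R_\ep)$ and their complement. The total dissipation converges to $\|e_xv\|^2+\gamma\|e_y\hat v\|^2$. On the complement, the lower semicontinuity bound is $\ge\|e_xv\|^2$, because $\varepsilon/R_\ep\to0$ means the balls have vanishing volume. On the balls, the bound is $\ge\gamma\|e_y\hat v\|^2$, because $R_\ep/r_\ep\to\infty$ covers all of $\RR^3\setminus{\cal O}$ in the unfolded variable. Equality of the sums forces equality of each part, and this gives (\ref{convhv}).
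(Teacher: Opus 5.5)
Your proposal follows the paper's proof: pass to the limit in the time-integrated $\varepsilon$-energy identity using the strong convergence of the data, compare with the energy identity of the two-scale limit system, and conclude by lower semicontinuity, splitting the viscous dissipation between the unfolded balls and their complement when $\gamma\in(0,\infty)$. Your Step 3 usefully justifies the limit energy identity, which the paper simply invokes.

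A few minor corrections:

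\begin{itemize}
\item The balls $B(\varepsilon k,R_\varepsilon)$ have vanishing total volume only if $R_\varepsilon/\varepsilon\to 0$. The printed condition $\varepsilon/R_\varepsilon\to 0$ is a typo, and under it your inference would be false.
\item The force terms need no Aubin--Lions argument. Strong--weak$^\ast$ convergence suffices, since the estimates only give $v_\varepsilon$ bounded in $L^\infty(0,T;L^2)\cap L^2(0,T;H^1_0)$, not in $W^{1,\infty}(0,T;L^2)$. The solid force term is $O((r_\varepsilon/\varepsilon)^{3/2})$ directly by (\ref{espso}), without any use of equi-integrability.
\item Uniformity in $t$ does not follow from (\ref{ecuEP1}) alone, because that inequality only controls the growth of the energy, not its decrease through dissipation. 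Instead, use that energy plus cumulative dissipation is equicontinuous in $t$, and that the cumulative dissipation is monotone in $t$ with a continuous limit.
\end{itemize}
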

\begin{proof}  The result easily follows from the convergence of the energies. We just prove it in  the case $\gamma\in (0,\infty)$. Integrating in time (\ref{EnergyIdentityFSI}) applied to (\ref{Peps}), passing to the limit in the right-hand side thanks to the strong convergence of the right-hand side and the initial data, and using the energy identity for (\ref{pblim2eg0in}) we have
    $$\ba{l}\dis \int_{\Omega_\ep^s} \Big({\rho^s\ep^3\over 2r_\ep^3}|\partial_t u_\ep|^2 +{\ep^3\over 2r_\ep} A e_x(u_\ep) : e_x(u_\ep) \Big) dx + \int_{\Omega_\ep^l} \Big({\rho^l\over 2} |v_\ep|^2 + 2\mu \int_0^t|e_x(v_\ep)|^2ds\Big)dx \\ \ecart\dis
        = \int_0^t\Big(\int_{\Omega_\ep^s} f_\ep \cdot \partial_t u_\ep \, dx + \int_{\Omega_\ep^l} f_\ep \cdot v_\ep \, dx\Big)ds\\ \ecart\dis  +\int_{\Om_\ep^\ast} \int_{\cal O}\Big({\rho^s\over 2}|\hat u^1_\ep|^2+{1\over 2}A e_y(\hat u^0_\ep) : e_y(\hat u^0_\ep)\Big)dydx + {\rho^l\over 2} \int_{\Omega_\ep^l} |v^0_\ep|^2dx \\ \ecart\dis
        \longrightarrow \into \Big(\int_0^tf \cdot v \,ds + \int_{\cal O}\Big({\rho^s\over 2}|\hat u^1|^2+A e_y(\hat u^0) : e_y(\hat u^0)\Big) dy +{ \rho^l\over 2} \into |v^0|^2\Big)dx\\ \ecart\dis
        =\into\Big(\int_{\cal O}\Big({\rho^s\over 2}|\partial_t\hat u|^2+{1\over 2}Ae_y(\widetilde u):e_y(\widetilde u)\Big)dy+{\rho^l\over 2} |v|^2\Big)dx\\ \ecart\dis
        +2\mu\int_0^t\into\Big(\gamma\int_{\RR^3\setminus{\cal O}}|e_y(\hat v)|^2dy+ |e_y(v)|^2\Big)dxds.
        \ea$$
    Thus, taking into account that for $R_\ep>0$ satisfying (\ref{proRe}), we get
    $$\ba{l}\dis \int_{\Omega_\ep^s} \Big({\rho^s\ep^3\over 2r_\ep^3}|\partial_t u_\ep|^2 +{\ep^3\over 2r_\ep} A e_x(u_\ep) : e_x(u_\ep) \Big) dx + \int_{\Omega_\ep^l} \Big({\rho^l\over 2} |v_\ep|^2 + 2\mu \int_0^t|e_x(v_\ep)|^2ds\Big)dx\\ \ecart\dis =
        \int_{\Om^\ast_\ep}\int_{\cal O}\Big({\rho^s\over 2}|\partial_t\hat u_\ep|^2+{1\over 2} Ae_y(\hat u_\ep):e_y(\hat u_\ep)\Big)dydx+{\rho^l\over 2}\int_{\Om_\ep^l}|v_\ep|^2dx\\ \ecart\dis+2\mu\int_0^t\int_{\Om\setminus B(\ep k,R_\ep)}\hskip-0.5cm|e_x(v_\ep)|^2dxds+{2\mu r_\ep\over\ep^3}\int_0^t \into\int_{B_{R_\ep/r_\ep}}\hskip-10pt|e_y(\hat v_\ep)|^2dydxds,
        \ea$$
    we have proved the following convergence result for the energies
    $$\ba{l}\dis\int_{\Om^\ast_\ep}\int_{\cal O}\Big({\rho^s\over 2}|\partial_t\hat u_\ep|^2+{1\over 2} Ae_y(\hat u_\ep):e_y(\hat u_\ep)\Big)dydx+{\rho^l\over 2}\int_{\Om_\ep^l}|v_\ep|^2dx\\ \ecart\dis+2\mu\int_0^t\int_{\Om\setminus B(\ep k,R_\ep)}\hskip-0.5cm|e_x(v_\ep)|^2dxds+{2\mu r_\ep\over\ep^3}\int_0^t \into\int_{B_{R_\ep/r_\ep}}\hskip-10pt|e_y(\hat v_\ep)|^2dydxds\\ \ecart\dis
        \longrightarrow \into\Big(\int_{\cal O}\Big({\rho^s\over 2}|\partial_t\hat u|^2+{1\over 2}Ae_y(\widetilde u):e_y(\widetilde u)\Big)dy+{\rho^l\over 2} |v|^2\Big)dx\\ \ecart\dis
        +2\mu\int_0^t\into\Big(|e_y(v)|^2+\gamma\int_{\RR^3\setminus{\cal O}}|e_y(\hat v)|^2dy\Big)dxds.
        \ea$$
    From this convergence it is simple to check that (\ref{cofhue}), (\ref{confve}) and (\ref{convhv}) hold.
\end{proof}
\begin{remark} \label{reelvm}
    Theorem \ref{thm:co2e} provides a two-scale limit problem for (\ref{Peps}) depending on the macroscopic variables $(t,x)$ and the microscopic on $y$. \par For $\gamma=0$ the equations describing the behavior of the elastic and fluid components are uncoupled.
    The macroscopic behavior of the fluid is just given  by (\ref{pblim2eg0b}). In this case the solid particles are so small that they do not affect the movement of the fluid.\par
    In the case $\gamma=\infty$ and $\gamma\in (0,\infty)$, the behavior of the solid particles and the fluid are interrelated. In order to get a macroscopic equation for the velocity and pressure of the fluid it is necessary to eliminate the function $\hat u$ in (\ref{pblim2egin}) and the functions $(\hat u, \hat v,\hat p)$ in (\ref{pblim2eg0in}). The corresponding result is given by Theorem \ref{enlgafi} below, which shows that the presence of solid particles leads to the appearance of a memory term in the Stokes system that governs the behavior of the fluid.
\end{remark}

\begin{theorem} \label{enlgafi} Assume $\gamma\in (0,\infty]$, then there exist ${\cal R}_\gamma\in L^2_{loc}([0,\infty);H^1({\cal O})')^3$ and
    ${\cal S}_\gamma\in L^2_{loc}([0,\infty);L^2({\cal O})')^3$ such that  $v$ and $\nabla_x p$ in Theorem \ref{thm:co2e} are characterized as the unique solution of the following Stokes problem with memory terms
    \beq\label{pblim2eg0innl}\left\{\ba{ll}\dis\partial_t \Big((2\rho^s|{\cal O}|+\rho^l)v+\int_0^t{\cal S}_\gamma(t-s)v\,ds\Big)+{\cal R}_\gamma(t) \hat u^0\\ \ecart\dis \hskip3cm+{\cal S}_\gamma(t)\hat u^1-2\mu\,{\rm div}_xe_x( v)+\nabla_xp=f&\dis\hbox{in }(0,T)\times\Om,\\ \ecart\dis
    {\rm div}_xv=0&\dis\hbox{in }(0,T)\times\Om,\\ \ecart\dis
    v=0&\dis\hbox{on }(0,T)\times\partial\Om,\\ \ecart\dis
    v_{|t=0}=v^0 &\dis\hbox{in }\Om.\ea\right.\eeq
    Moreover, the operators ${\cal S}_\gamma$ and ${\cal R}_\gamma$ satisfy the positivity condition
    \beq\label{positnl}\ba{l}\dis{1\over 2}\Big|
    2\rho^s|{\cal O}|w+u^1+\int_0^t\Big({\cal S}_\gamma(t-s)w
    +{\cal R}_\gamma u^0+{\cal S}_\gamma u^1\Big)ds\Big|^2\\ \ecart\dis \leq
    \rho^s|{\cal O}| \Bigg(\rho^s\big(|w|^2-|w^0|^2\big)+\int_0^t\Big(\partial_t\int_0^r{\cal S}_\gamma(r-s)w\,ds+{\cal R}_\gamma u^0+{\cal S}_\gamma u^1\Big)wdr\Bigg)\\ \ecart\dis
    +{\rho^s|{\cal O}|\over 2}\int_{\cal O}\big(\rho^s|u^1|^2+Ae_y(u^0):e_y( u^0)\big)dy,\ \ \forall\,t\in (0,T),
    \ea\eeq
    for every $w\in H^1(\RR)$, every $u^0\in H^1({\cal O})^3$ and every $u^1\in L^2({\cal O})^3$.\par
    For $\gamma<\infty$, the operators ${\cal R}_\gamma$ and
    ${\cal S}_\gamma$ are in fact in $ L^2(0,\infty;H^1({\cal O})')^3$ and
    ${\cal S}_\gamma\in L^2(0,\infty;L^2({\cal O})')^3$ respectively and there exists $C>0$ independent of $\gamma$ such that
    \beq\label{acnogf}\|{\cal R}\|_{L^2(0,\infty;H^1({\cal O})')^3}+\|{\cal S}\|_{L^2(0,\infty;L^2({\cal O}))^3}\leq C \sqrt{\gamma}.\eeq
\end{theorem}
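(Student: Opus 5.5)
The strategy is to eliminate the microscopic unknowns from the two-scale systems (\ref{pblim2egin}) and (\ref{pblim2eg0in}). Since these problems are linear, we solve the cell problem for a given macroscopic velocity history and then express the force exerted by the inclusions on the fluid as a time convolution.

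\emph{Reduction to a cell problem.} Fix $x$ and regard $w(t)=v(t,x)\in\RR^3$ as data. For $\gamma=\infty$ the cell problem is
\[
\rho^s\partial^2_{tt}\hat u-{\rm div}_y(Ae_y(\hat u))=0 \ \hbox{ in }{\cal O},\qquad \partial_t\hat u=w \ \hbox{ on }\partial{\cal O},
\]
with initial data $(u^0,u^1)$. Writing $\hat u=\int_0^t w\,ds+u^0+z$, one has $z=0$ on $\partial{\cal O}$, and $z$ solves a Dirichlet elastodynamic equation. This equation is generated by a unitary group on $H^1_0({\cal O})^3\times L^2({\cal O})^3$, and its source term $-\rho^s\partial_t w$ is constant in $y$.

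For $\gamma\in(0,\infty)$ the cell problem couples the elastodynamic equation in ${\cal O}$ with the exterior stationary Stokes problem in $\RR^3\setminus{\cal O}$, with $\hat v\to w$ at infinity and transmission condition $Ae_y(\hat u)\nu=\gamma(2\mu e_y(\hat v)-\hat pI)\nu$. We eliminate $\hat v,\hat p$ through the exterior Stokes Dirichlet-to-Neumann map $\Lambda:H^{1/2}(\partial{\cal O})^3\to H^{-1/2}(\partial{\cal O})^3$. This map is nonnegative, vanishes on constants, and its kernel is the rigid motions (the rotational part vanishes in the far field). The condition then becomes the Robin-type dynamic condition $Ae_y(\hat u)\nu=-\gamma\Lambda(\partial_t\hat u-w)$ on $\partial{\cal O}$. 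With $\hat u$ shifted by $\int_0^t w$, we write the problem as $\dot U={\cal A}_\gamma U+Bw'$ on $H^1({\cal O})^3\times L^2({\cal O})^3$, or on a quotient space. Exactly as in Theorem \ref{Thexsol}, the Lumer--Phillips theorem gives a contraction semigroup $S_\gamma(t)$: dissipativity is $-\gamma\langle\Lambda\cdot,\cdot\rangle\le 0$, and surjectivity follows from Lax--Milgram.

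\emph{Definition of the kernels.} By Duhamel's formula, $\hat u$ is linear in $(u^0,u^1,w)$. The term $\rho^s\partial^2_{tt}\int_{\cal O}\hat u\,dy$ in the macroscopic equation is then $\partial_t$ of $\rho^s\int_{\cal O}\partial_t\hat u\,dy$. We split off the instantaneous part (giving the coefficient $2\rho^s|{\cal O}|$ after the decomposition) and a remainder, which we write as $\int_0^t{\cal S}_\gamma(t-s)w\,ds+\int_0^t({\cal R}_\gamma u^0+{\cal S}_\gamma u^1)$. Concretely, ${\cal R}_\gamma(t)u^0$ and ${\cal S}_\gamma(t)u^1$ are $\rho^s\partial_t\int_{\cal O}$ of the first component of $S_\gamma(t)(u^0,0)$ and $S_\gamma(t)(0,u^1)$, respectively. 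The convolution kernel is the response to $w$, read through the same functional. The regularity ${\cal R}_\gamma\in L^2_{loc}(H^1({\cal O})')$ and ${\cal S}_\gamma\in L^2_{loc}(L^2({\cal O})')$ follows from the semigroup bounds $\|S_\gamma(t)\|\le1$ together with the boundedness of the dual action. Substituting these expressions into (\ref{pblim2egin})/(\ref{pblim2eg0in}) yields (\ref{pblim2eg0innl}). Uniqueness follows because any solution of (\ref{pblim2eg0innl}) reconstructs, via the cell semigroup, a solution of the two-scale system, which is unique by energy arguments. Alternatively, one can use the energy inequality (\ref{positnl}) together with Gronwall's lemma.

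\emph{Positivity and decay.} Inequality (\ref{positnl}) is the energy identity of the cell problem. We multiply the elastic equation by $\partial_t\hat u$ and integrate over ${\cal O}$ and over time, dropping the nonnegative dissipation $\gamma\langle\Lambda\cdot,\cdot\rangle$. We then bound $|\int_{\cal O}\rho^s\partial_t\hat u|^2\le\rho^s|{\cal O}|\int\rho^s|\partial_t\hat u|^2$ by Cauchy--Schwarz, and identify the mean with the expression inside the absolute value on the left.

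For $\gamma<\infty$ the main obstacle is the global bound (\ref{acnogf}), which requires $L^2(0,\infty)$ integrability in time rather than mere local integrability. Our plan is to use the dissipation identity
\[
\frac{d}{dt}E+\gamma\langle\Lambda(\partial_t\hat u),\partial_t\hat u\rangle=0 ,
\]
which for zero forcing gives $\gamma\int_0^\infty\langle\Lambda\partial_t\hat u,\partial_t\hat u\rangle\le E(0)$. The needed estimate would then follow from $\|\mathrm{kernel}\|^2\lesssim\gamma^2\int_0^\infty\|\Lambda\partial_t\hat u\|^2$ via the force balance: by the interface condition and the divergence theorem, $\rho^s\partial_t\int_{\cal O}\partial_t\hat u=\int_{\partial{\cal O}}Ae_y(\hat u)\nu$. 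Coercivity of $\Lambda$ on $H^{1/2}$ modulo rigid motions, combined with the conservation (\ref{confhu}) and a trace estimate, gives $\gamma^2\|\Lambda\partial_t\hat u\|^2\lesssim\gamma\cdot\gamma\langle\Lambda\partial_t\hat u,\partial_t\hat u\rangle$, and hence the bound $C\sqrt\gamma$. The delicate point is controlling the trace of $\partial_t\hat u$, which is only in $L^2$. To handle it, we would first argue for smooth data in $D({\cal A}_\gamma)$ and then conclude by density.
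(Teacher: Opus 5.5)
Your architecture is the paper's. You build a Lumer--Phillips semigroup for the cell problem and use the Duhamel representation (\ref{exphu}). You define the kernels as resultant-force functionals, as in (\ref{decRcS}). You get (\ref{positnl}) from the cell energy identity plus Cauchy--Schwarz on $\int_{\cal O}\partial_t\hat u\,dy$, and (\ref{acnogf}) from the force balance plus the viscous dissipation.

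\textbf{The genuine gap: $L^2_{loc}$ regularity of the kernels for $\gamma=\infty$.} You claim that ${\cal R}_\gamma,{\cal S}_\gamma\in L^2_{loc}$ follows from $\|S_\gamma(t)\|\le 1$; it does not. The kernels in (\ref{pblim2eg0innl}) are force-level quantities, e.g.\ $\langle{\cal R}_\gamma(t),u^0\rangle=\rho^s\frac{d^2}{dt^2}\int_{\cal O}{\cal G}_{11}(t)u^0\,dy=\int_{\partial{\cal O}}Ae_y({\cal G}_{11}(t)u^0)\nu\,d\sigma$. Your phrase ``$\rho^s\partial_t\int_{\cal O}$ of the first component'' is off by one time derivative: with that literal definition your regularity argument works, but the kernels no longer fit the equation. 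The contraction bound only controls $\rho^s\int_{\cal O}\partial_t\widetilde u\,dy$ in $L^\infty$, i.e.\ the primitives $\int_0^t{\cal R}_\gamma\,ds$ and $\int_0^t{\cal S}_\gamma\,ds$. That is exactly (\ref{acoiogi}), so a priori the kernels are only time derivatives of bounded functions; for finite-energy data the boundary traction has no meaning at a fixed time. For $\gamma<\infty$ your dissipation argument repairs this and gives more. For $\gamma=\infty$ the cell problem is conservative, and nothing in your proposal controls the traction. The paper fills this step with the hidden regularity estimate for the Dirichlet elasticity system, $\int_0^t\int_{\partial{\cal O}}|Ae_y(\widetilde u)\nu|^2\,d\sigma\,ds\le C_t\big(\|\widetilde u^0\|^2_{H^1({\cal O})^3}+\|\widetilde u^1\|^2_{L^2({\cal O})^3}\big)$, obtained by the multiplier method. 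Some boundary trace estimate of this kind is indispensable.

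\textbf{Your description of $\Lambda$ is wrong, though the estimate you need survives.} The exterior field is $\widetilde v=\hat v-w\in D^{1,2}(\RR^3\setminus\overline{\cal O})^3$. Hence $\langle\Lambda g,g\rangle=2\mu\|e_y(\widetilde v)\|^2_{L^2}=0$ forces $\widetilde v$ to be a rigid motion lying in $L^6$, so $\widetilde v=0$. Thus $\Lambda$ has trivial kernel, and $\Lambda a$ for a constant $a$ is the Stokes drag traction. Had $\Lambda$ annihilated constants, symmetry would give $\int_{\partial{\cal O}}\Lambda g\,d\sigma=0$ for every $g$. The resultant force on the particle, and with it every memory term, would then vanish. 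What you actually need is only $|\langle\Lambda g,a\rangle|^2\le\langle\Lambda a,a\rangle\,\langle\Lambda g,g\rangle$, the Cauchy--Schwarz inequality for the nonnegative symmetric form. It bounds the force by $C\gamma\|e_y(\widetilde v)\|_{L^2}$ with $C$ independent of $\gamma$. It needs no coercivity and no use of (\ref{confhu}), which is irrelevant here since rigid motions have zero flux anyway. It also needs no trace of $\partial_t\hat u$, because the dissipation already controls $\|e_y(\widetilde v)\|_{L^2}$ in $L^2(0,\infty)$. This is slightly cleaner than the paper's step. The paper tests the stress with a cutoff equal to $1$ on ${\cal O}$, so it must estimate $\widetilde p$ modulo constants via De Rham; testing with the Stokes extension of $a$ removes the pressure altogether.

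\textbf{Smaller points.} For $\gamma=\infty$, $z=\hat u-u^0-\int_0^tw\,ds$ also carries the source ${\rm div}_y(Ae_y(u^0))\in H^{-1}({\cal O})^3$. Lift the trace of $u^0$ instead, as is done with $U^0$ in Remark \ref{rem:spectral}.

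You take the instantaneous coefficient $2\rho^s|{\cal O}|$ from the statement rather than deriving it, and the splitting you describe does not produce it. Differentiate (\ref{exphu}) and use $\frac{d{\cal G}_{12}}{dt}(0)={\cal G}_{22}(0)=I$ (see (\ref{exdpsg})). This gives $\partial_t\hat u={\cal G}_{21}(t)\hat u^0+{\cal G}_{22}(t)\hat u^1-\int_0^t\frac{d^2{\cal G}_{12}}{dt^2}(t-s)v\,ds$. The $v(t)$ coming from $\int_0^tv\,ds$ is cancelled, so $\rho^s\int_{\cal O}\partial_t\hat u\,dy$ contains no instantaneous term at all. This bookkeeping, which the paper's proof does not display either, has to be written out. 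It determines both the instantaneous coefficient and the sign in front of the convolution.

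Finally, your uniqueness argument, which rebuilds the two-scale solution from the cell semigroup, supplies a step the paper leaves implicit.
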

\begin{proof} Let us first assume $\gamma\in (0,\infty).$\par
    The functions $(\hat u,\hat v,\hat p)$ in (\ref{pblim2eg0in}) can be obtained from $v$ by solving the system
    \beq\label{pbauxgf}\left\{\ba{ll}\dis
    \rho^s\partial^2_{tt}\hat u-{\rm div}_y(Ae_y(\hat u))=0&\dis\hbox{in }(0,T)\times \Om\times {\cal O},\\ \ecart\dis
    -2\mu\,{\rm div}_y e_y(\hat v)+\nabla_y\hat p=0&\dis\hbox{in }(0,T)\times\Om\times (\RR^3\setminus{\cal O}),\\ \ecart\dis
    {\rm div}_y\hat v=0&\dis\hbox{in }(0,T)\times\Om\times (\RR^3\setminus{\cal O}),\\ \ecart\dis
    \partial_t\hat u=\hat v,\ Ae_y(\hat u)\nu=\gamma\big(2\mu e_y(\hat v)-\hat pI\big)\nu&\dis\hbox{on }(0,T)\times\Om\times \partial{\cal O},\\ \ecart\dis
    \hat v-v\in D^{1,2}(\RR^3\setminus{\cal O})^3,\\ \ecart\dis
    \hat u_{|t=0}=\hat u^0,\ \partial_t\hat u_{|t=0}=\hat u^1&\dis\hbox{in }\Om\times {\cal O}.\ea\right.\eeq
    Let us use the semigroup theory to solve this problem, for such purpose we define ${\cal A}_\gamma:D({\cal A}_\gamma)\subset H^1({\cal O})^3\times L^2({\cal O})^3\to H^1({\cal O})^3\times L^2({\cal O})^3$ by
    $$\ba{l}\dis D({\cal A}_\gamma)=\Big\{(\widetilde u,\widetilde z)\in H^1({\cal O})^3\times L^2({\cal O})^3:\ {\rm div}_y Ae_y(\widetilde u)\in L^2({\cal O})^3,\ \widetilde z\in H^1({\cal O})^3,\\ \ecart\dis\qquad\qquad\qquad \exists (\widetilde v,\widetilde p)\in D^{1,2}(\RR^3\setminus {\cal O})^3\times L^2(\RR^3\setminus {\cal O})\hbox{ such that }\\ \ecart\dis\qquad\qquad\qquad \qquad
        -2\mu {\rm div}_y e_y(\widetilde  v)+\nabla_y\widetilde p=
        {\rm div}_y\widetilde  v=0 \hbox{ in } \RR^3\setminus{\cal O},\\ \ecart\dis\qquad\qquad\qquad \qquad\qquad
        \widetilde  z=\widetilde v,\ Ae_y(\widetilde  u)\nu=\gamma\big(2\mu e_y(\widetilde  v)-\widetilde p I\big)\nu\ \hbox{ on }\partial{\cal O}\Big\},
        \ea$$
    $${\cal A}_{\gamma}\left(\ba{c} \widetilde u\\ \dis \widetilde z\ea\right)=\left(\ba{c}\widetilde z\\ \dis{1\over \rho^s}{\rm div}_y Ae_y(\widetilde u)\ea\right),\ \ \forall(\widetilde u,\widetilde z)\in H^1({\cal O})^3\times L^2({\cal O})^3$$
    in such way that taking
    \beq\label{cavsgf}\widetilde u=\hat u-\int_0^tv\,ds,\quad \widetilde z=\partial_t\hat u-v,\quad \widetilde v=\hat v-v,\eeq
    we have that (\ref{pbauxgf}) is equivalent to
    \beq\label{pbauegf}{d\over dt}\left(\ba{c} \widetilde u\\ \dis \widetilde z\ea\right)={\cal A}_{\gamma}\left(\ba{c} \widetilde u\\ \dis \widetilde z\ea\right)-\left(\ba{c} 0\\ \dis \partial_tv\ea\right)\ \hbox{ in }(0,T),\quad \left(\ba{c} \widetilde u(0)\\ \dis \widetilde z(0)\ea\right)= \left(\ba{c} \hat u^0\\ \dis \hat u^1-v(0)\ea\right).
    \eeq
    Taking in $H^1({\cal O})^3\times L^2({\cal O})^3$ the norm
    $$\|(\widetilde u,\widetilde z)\|^2=\int_{\cal O}\Big(Ae_y(\widetilde u):e_y(\widetilde u)+|\widetilde u|^2+\rho^s|\widetilde z|^2\Big)dy,\ \ \forall\, (\widetilde u,\widetilde z)\in H^1({\cal O})^3\times L^2({\cal O})^3,$$
    and using that
    $$\left({\cal A}_{\gamma}\left(\ba{c} \widetilde u\\ \dis \widetilde z\ea\right),\left(\ba{c} \widetilde u\\ \dis \widetilde z\ea\right)\right)=\int_{\cal O}\widetilde z\cdot\widetilde u\,dy-2\mu\gamma\int_{\RR^3\setminus {\cal O}}\hskip-5pt |e_y(\widetilde v)|^2dy,\quad \forall\, (\widetilde u,\widetilde z)\in D({\cal A}_\gamma),$$
    we get that ${\cal A}_\gamma - I$ is dissipative.
    Moreover, reasoning similarly to the proof of Theorem \ref{Thexsol},
    the Lax-Milgram's theorem proves that for every $(g,h)\in H^1({\cal O})^3\times L^2({\cal O})^3$ and every $\lambda>0$ problem
    $${\cal A}_\gamma\left(\ba{c} \widetilde u\\ \dis \widetilde z\ea\right)-\lambda\left(\ba{c} \widetilde u\\ \dis \widetilde z\ea\right)=\left(\ba{c} g\\ \dis h\ea\right),$$
    or equivalently
    $$\left\{\ba{ll}\dis \widetilde z-\lambda \widetilde u=g& \hbox{in }{\cal O},\\ \ecart\dis
        {\rm div}\big(Ae_y(\widetilde u)\big)-\lambda\rho^s\widetilde z=\rho^s h&\dis\hbox{in }{\cal O},\\ \ecart\dis
        2\mu{\rm div}_y e_y(\widetilde v)-\nabla_y\widetilde p=0 &\dis\hbox{in }\RR^3\setminus{\cal O},\\ \ecart\dis
        {\rm div}_y\widetilde v=0&\dis\hbox{in }\RR^3\setminus{\cal O},\\ \ecart\dis
        \widetilde  z=\widetilde v,\ Ae_y(\widetilde  u)\nu=\gamma\big(2\mu e_y(\widetilde  v)-\widetilde p I\big)\nu&\dis\hbox{on }\partial{\cal O},\\ \ecart\dis \widetilde v\in D^{1,2}(\RR^3\setminus{\cal O})^3,
        \ea\right.$$
    has a unique solution, such that $(\widetilde u,\widetilde v)\in D({\cal A}_\gamma)$, $(\widetilde v,\widetilde p)\in D^{1,2}(\RR^3\setminus{\cal O})^3\times L^2(\RR^3\setminus{\cal O}))$. This proves that ${\cal A}_\gamma-I$ is in the conditions of the Lumer-Philips theorem and then that ${\cal A}_\gamma$ generates a continuous semigroup ${\cal G}(t)$ in $H^1({\cal O})^3\times L^2({\cal O})^3$. Writing
    $${\cal G}(t)=\left(\ba{cc} {\cal G}_{11}(t) & {\cal G}_{12}(t)\\ \ecart\dis
        {\cal G}_{21}(t) & {\cal G}_{22}(t)\ea\right),$$
    where the definition of ${\cal A}_\gamma$ gives
    \beq\label{exdpsg}{\cal G}_{21}(t)\widetilde u={d{\cal G}_{11}\over dt}(t)\widetilde u,\ \  {\cal G}_{22}(t)\widetilde z={d{\cal G}_{12}\over dt}(t)\widetilde z,\quad \forall(\widetilde u,\widetilde z)\in H^1({\cal O})^3\times L^2({\cal O})^3,\eeq
    we get by (\ref{cavsgf}) and (\ref{pbauegf}) that the function $\hat u$ in (\ref{pbauxgf}) is given by
    \beq\label{exphu}\ba{ll}\dis \hat u&\dis=\int_0^tvds+{\cal G}_{11}(t)\hat u^0+{\cal G}_{12}(t)(\hat u^1-v(0))-\int_0^t {\cal G}_{12}(t-s)\partial_svds\\ \ecart &\dis= \int_0^tvds+{\cal G}_{11}(t)\hat u^0+{\cal G}_{12}(t)\hat u^1-\int_0^t {d{\cal G}_{12}\over dt}(t-s)vds.\ea\eeq
    Let us obtain some regularity properties for ${\cal G}$. We have that for every $(\widetilde u^0,\widetilde u^1)\in H^1({\cal O})^3\times L^2({\cal O})^3$ the function
    $$\widetilde u={\cal G}_{11}(t)\widetilde u^0+{\cal G}_{12}(t)\widetilde u^1$$
    is characterized by the existence of $(\widetilde v,\widetilde p)\in L^2(0,\infty;D^{1,2}(\RR^3\setminus{\cal O}))^3\cap L^2(0,\infty;L^2(\RR^3\setminus{\cal O}))$ such that $(\widetilde u,\widetilde v,\widetilde p)$ solves
    \beq\label{carCGgf}\left\{\ba{ll} \rho^s\partial^2_{tt}\widetilde u-{\rm div}_y\big(Ae_y(\widetilde u)\big)=0&\dis\hbox{in }(0,\infty)\times {\cal O},\\ \ecart\dis
    -2\mu{\rm div}e_y(\widetilde v)+\nabla_y\widetilde p=0&\dis\hbox{in }(0,\infty)\times (\RR^3\setminus{\cal O}),\\ \ecart\dis
    {\rm div}_y\widetilde v=0 &\dis\hbox{in }(0,\infty)\times (\RR^3\setminus{\cal O}),\\ \ecart\dis \partial_t\widetilde u=\widetilde v,\ Ae_y(\widetilde u)\nu=\gamma\big(2\mu e_y(\widetilde v)-\widetilde pI\big)\nu&\dis \hbox{on }(0,\infty)\times \partial{\cal O},\\ \ecart\dis
    \widetilde u_{|t=0}=\widetilde u^0,\ \partial_t\widetilde u_{|t=0}=\widetilde u^1&\dis\hbox{in }{\cal O}.\ea\right.\eeq
    For this system, the conservation of the energy reads as
    \beq\label{consegf}{1\over 2}{d\over dt}\int_{\cal O}\big(\rho^s|\partial_t\widetilde u|^2+Ae_y(\widetilde u):e_y(\widetilde u)\big)dy+2\gamma\mu\int_{\RR^3\setminus{\cal O}}|e_y(\widetilde v)|^2dy=0.\eeq
    \par
    On the other hand, for $\varphi\in C^\infty_c(\RR^3)$ such that $\varphi=1$ in ${\cal O}$, (\ref{carCGgf}) gives
    \beq\label{exdesgf}\ba{l}\dis \rho^s\partial^2_{tt}\int_{\cal O}\widetilde u\,dy=\int_{\partial{\cal O}}Ae_y(\widetilde u)\nu\,ds(y)=\gamma\int_{\partial{\cal O}}\big(2\mu e_y(\widetilde v)-pI\big)\nu\,ds(y)\\ \ecart\dis =-2\gamma\mu\int_{\RR^3\setminus{\cal O}}\hskip-4pt e_y(\widetilde v)\nabla_y\varphi\,dy+\gamma\int_{\RR^3\setminus{\cal O}}\hskip-4pt\widetilde  p\nabla_y\varphi\,dy.\ea\eeq
    In the last term of this equality we can use that thanks to De Rham theorem and $\nabla \widetilde p=2\mu {\rm div}e(\widetilde v)$ in $\RR^3\setminus{\cal O}$, there exists a constant $C>0$ and a measurable function $\widetilde c:(0,\infty)\to \RR$ such that
    $$\|p-\widetilde c\|_{L^2(\RR^3\setminus{\cal O})}\leq C\|e(\widetilde v)\|_{L^2(\RR^3\setminus {\cal O})^{3\times 3}}\ \hbox{ a.e. in }(0,\infty).$$
    Using then that $\varphi\in C^\infty_c(\RR^3\setminus {\cal O})$, $\varphi=1$ in ${\cal O}$ implies
    $$\int_{\RR^3\setminus{\cal O}}\hskip-4pt\nabla_y\varphi\,dy=-\int_{\partial{\cal O}}\nu\,ds(y)=0,$$
    the last term in (\ref{exdesgf}) satisfies
    $$\int_{\RR^3\setminus{\cal O}}\hskip-4pt \widehat p\nabla_y\varphi\,dy=\int_{\RR^3\setminus{\cal O}}\hskip-4pt \big(\widehat p-c\big)\nabla_y\varphi\,dy.$$
    Therefore (\ref{exdesgf}) implies for another constant $C>0$
    $$\Big|\int_{\cal O}\partial^2_{tt}\widetilde u\,dy\Big|\leq C\gamma\|e_y(\widetilde v)\|_{L^2(\RR^3\setminus{\cal O})^{3\times 3}}\ \hbox{ a.e. in }(0,\infty),$$
    which thanks to (\ref{consegf}) proves
    $$\int_{\RR}\Big|\int_{\cal O}\partial^2_{tt}\widetilde u\,dy\Big|^2dy\leq C\gamma\Big(\|e_y(\widetilde u^0)\|^2_{L^2({\cal O})^{3\times 3}}+\|\widetilde u^1\|^2_{L^2({\cal O})^3}\Big).$$
    This inequality and (\ref{consegf}) prove that  the linear operators ${\cal R}(t)\in (H^1({\cal O})^3)'$, ${\cal S}(t)\in L^2({\cal O})^3$ defined by
    \beq\label{decRcS}\langle {\cal R}(t),\widetilde u^0\rangle =\rho^s\partial^2_{tt}\int_{\cal O}{\cal G}_{11}(t)\widetilde u^0dy,\quad
    \int_{\cal O}{\cal S}(t)\widetilde u^1dy=\rho^s\partial^2_{tt}\int_{\cal O}{\cal G}_{12}(t)\widetilde u^1dy,
    \eeq
    for every $\widetilde u^0\in H^1({\cal O})^3$, $\widetilde u^1\in L^2({\cal O})^3$ satisfy
    ${\cal R}\in L^2(0,\infty;H^1({\cal O})')^3$, ${\cal S}\in L^2(0,\infty;L^2({\cal O}))^3$. Moreover, there exists $C>0$ such that (\ref{acnogf}) is satisfied.\par
    Deriving twice with respect to $t$ in  (\ref{exphu}) and using the first equation in (\ref{pblim2eg0in}), we deduce that  $v$, $p$ solve
    (\ref{pblim2eg0innl}).\par
    On the other hand, replacing in (\ref{pbauxgf}) $v$ by a function $w\in H^1(0,\infty)$ and $\hat u^0$, $\hat u^1$ as functions $u^0\in H^1({\cal O})^3$, $u^1\in L^2({\cal O})^3$, multiplying the first equation  by $\partial_t\hat u-w$, the second one by $\hat v-v$, integrating and adding, we get the energy identity
    $${1\over 2}{d\over dt}\int_{\cal O}\big(\rho^s|\partial_t\hat u|^2+Ae_y(\widetilde u):e_y(\widetilde u)\big)dy+2\mu\gamma \int_{\RR^3\setminus{\cal O}}|e_y(\hat v)|^2dy=\rho^s\int_{\cal O}\partial^2_{tt}\hat u\,dy\cdot v\ \hbox{ in }(0,T),$$
    which using (\ref{exphu}) as above can be written as
    $$\ba{l}\dis {1\over 2}{d\over dt}\int_{\cal O}\big(\rho^s|\partial_t\hat u|^2+Ae_y(\widetilde u):e_y(\widetilde u)\big)dy+2\mu\gamma \int_{\RR^3\setminus{\cal O}}|e_y(\hat v)|^2dy\\ \ecart\dis
        =\Big(\partial_t\Big(2\rho^s|{\cal O}|w+\int_0^t{\cal S}(t-s)w\,ds\Big)+{\cal R}(t) u^0+{\cal S}(t)u^1\Big)v.\ea.$$
    Integrating in $(0,T)$, using (\ref{exphu}) and the definitions of ${\cal R}$ and ${\cal S}$, and taking into account that
    $$\Big|\int_{\cal O}\partial_t \hat u(0,y)dy+\int_0^t\int_{\cal O}\rho^s\partial^2_{tt}\hat u\,dyds\Big|^2=\Big|\int_{\cal O}\partial_t\hat u(t,y)dy\Big|^2\leq |{\cal O}|\int_{\cal O}|\partial_t\hat u|^2dy,$$
    we deduce in particular (\ref{positnl})
    \par\medskip
    Let us now consider the case $\gamma=\infty$. The reasoning is similar (somewhat simpler). Now,  $\hat u$ is obtained from $v$, $\hat u^0$ and $\hat u^1$ by solving the system
    \beq\label{sisthugi}\left\{\ba{ll}\dis
    \rho^s\partial^2_{tt}\hat u-{\rm div}_y(Ae_y(\hat u))=0&\hbox{in }(0,T)\times \Om\times {\cal O}\\ \ecart\dis
    \partial_t\hat u=v &\dis\hbox{on }(0,T)\times\Om\times \partial{\cal O},\\ \ecart\dis
    \hat u_{|t=0}=\hat u^0,\ \ \partial_t\hat u_{|t=0}=\hat u^1 &\dis\hbox{in }\Om\times {\cal O},\ea\right.\eeq
    which implies that $\hat u$ is still given by (\ref{exphu}), where now  for every $(\widetilde u^0,\widetilde u^1)\in H^1({\cal O})^3\times L^2({\cal O})^3$ the function
    $$\widetilde u={\cal G}_{11}(t)\widetilde u^0+{\cal G}_{12}(t)\widetilde u^1$$
    is defined as the solution of
    \beq\label{desmggi}\left\{\ba{ll}\dis
    \rho^s\partial^2_{tt}\widetilde u-{\rm div}_y(Ae_y(\widetilde u))=0&\dis\hbox{in }(0,T)\times \Om\times {\cal O}\\ \ecart\dis
    \partial_t\widetilde u=0&\dis\hbox{on }(0,T)\times\Om\times \partial{\cal O},\\ \ecart\dis
    \widetilde u_{|t=0}=\widetilde u^0,\ \ \partial_t\widetilde u_{|t=0}=\widetilde u^1&\dis \hbox{in }\Om\times {\cal O},\ea\right.\eeq\par
    The conservation of the energy now reads as
    \beq\label{coegi}{1\over 2}{d\over dt}\int_{\cal O}\big(\rho^s|\partial_t\widetilde u|^2+Ae_y(\widetilde u):e_y(\widetilde u)\big)dy=0,\eeq
    while thanks to the hidden regularity theorem for the elasticity system (\cite{KuMaAm} , \cite{Lio}), we deduce that for every $t>0$, there exists $C_t>0$
    such that
    $$\ba{l}\dis \Big|\rho^s\int_0^t\int_{\cal O}\partial^2_{tt}\widetilde u\,dy\Big|^2=\Big|\int_0^t\int_{\partial{\cal O}}Ae_y(\widetilde u)\nu\,ds(y)\Big|^2\\ \ecart\dis \leq
        t|\partial{\cal O}|\int_{\partial{\cal O}}\big|Ae_y(\widetilde u)\nu\big|^2ds(y)\leq C_t\Big(\|\widetilde u^0\|^2_{H^1({\cal O})^3}+\|\widetilde u^1\|^2_{L^2({\cal O})^3}\Big).\ea$$
    This allows us to derive twice in (\ref{exphu}) to deduce that $u$ is still the solution of (\ref{pblim2eg0innl}) where now ${\cal R},$ ${\cal S}$ are just in
    $L^2_{loc}([0,\infty);H^1({\cal O}')^3)$ and $L^2_{loc}([0,\infty);L^2({\cal O})^3)$ respectively. Thanks to (\ref{coegi}) they are such that there exists $C>0$ satisfying
    \beq\label{acoiogi} \Big\|\int_0^t{\cal R}ds\Big\|_{(H^1({\cal O})')^3}+\Big\|\int_0^t{\cal S}ds\Big\|_{L^2({\cal O})^3}\leq C,\ \ \forall\, t\geq 0.\eeq
    Inequality (\ref{positnl}) follows as above, using the conservation of the energy for (\ref{sisthugi}).
\end{proof}

\begin{remark}[Explicit form of the memory kernels for $\gamma=\infty$]\label{rem:spectral}
    In the case $\gamma=\infty$, the microscopic system (\ref{sisthugi}) is a purely conservative elastodynamic equation with homogeneous Dirichlet boundary condition on the velocity. This allows us to compute the memory kernels ${{\cal R}_\infty}$ and ${{\cal S}_\infty}$ explicitly via spectral decomposition.

    Let ${\cal L}_D$ denote the Dirichlet elastic operator on ${\cal O}$, defined by
    $${\cal L}_D\phi=-{1\over \rho^s}{\rm div}_y(Ae_y(\phi)),$$
    with domain $D({\cal L}_D)=\{\phi\in H^1_0({\cal O})^3:\ {\rm div}_y(Ae_y(\phi))\in L^2({\cal O})^3\}$.
    Since ${\cal L}_D$ is a positive self-adjoint operator with compact resolvent, there exists an orthonormal basis $(\psi_n)_{n\geq 1}$ of $L^2({\cal O})^3$ consisting of eigenfunctions of ${\cal L}_D$ associated with positive eigenvalues $(\mu_n)_{n\geq 1}$ satisfying $\mu_n\to\infty$. We define the moments
    $$b_n:=\int_{\cal O}\psi_n(y)\,dy\in\RR^3,\quad n\geq 1,$$
    which satisfy $\sum_{n\geq 1}|b_n|^2=3|{\cal O}|$. Since $\hat u$ is determined by $v$, $\hat u^0$ and $\hat u^1$ through (\ref{sisthugi}), expanding in the basis $(\psi_n)_{n\geq 1}$ and integrating over ${\cal O}$ yields
    \begin{multline*}
        \rho^s\int_{\cal O}\partial^2_{tt}\hat u\,dy=\rho^s\sum_{n\geq 1}\Big(-\mu_n\cos(\sqrt{\mu_n}\,t)\,z^0_n(x)\\ -\sqrt{\mu_n}\sin(\sqrt{\mu_n}\,t)\,u^1_n(x)+\mu_n\int_0^t\cos(\sqrt{\mu_n}(t-s))\,v(s,x)\,ds\Big)b_n,
    \end{multline*}
    where
    $$z^0_n(x)=\int_{\cal O}(\hat u^0(x,y)-U^0(x,y))\cdot\psi_n(y)\,dy,\quad u^1_n(x)=\int_{\cal O}\hat u^1(x,y)\cdot\psi_n(y)\,dy,$$ and $U^0$ is the harmonic lifting of the boundary trace of $\hat u^0$.

    Comparing with (\ref{pblim2eg0innl}), the convolution kernel ${\cal S}_\infty$ acts as
    $${\cal S}_\infty(t)\,\xi=-\rho^s\sum_{n\geq 1}\mu_n\cos(\sqrt{\mu_n}\,t)\,(\xi\cdot b_n)\,b_n,\quad \forall\, \xi\in \RR^3,\quad t\geq 0.$$
    In particular, the matrix entries of ${\cal S}_\infty$ are
    $$\big({\cal S}_\infty(t)\big)_{ij}=-\rho^s\sum_{n\geq 1}\mu_n\cos(\sqrt{\mu_n}\,t)\,(b_n)_i(b_n)_j.$$
    Since each entry is an absolutely convergent trigonometric series with frequencies $\{\sqrt{\mu_n}\}_{n\geq 1}$, ${\cal S}_\infty$ is an \emph{almost-periodic} function of $t$. In particular, ${\cal S}_\infty(t)$ does not converge to zero as $t\to+\infty$ (unless ${\cal S}_\infty\equiv 0$, which is excluded since ${\cal S}_\infty(0)=-\rho^s\sum_n\mu_n(b_n)_i(b_n)_j\neq 0$). Nevertheless, the time-average of ${\cal S}_\infty$ vanishes:
    $${1\over\tau}\int_0^\tau {\cal S}_\infty(t)\,dt\to 0\quad \hbox{in }{\cal L}(\RR^3;\RR^3)\hbox{ as }\tau\to+\infty.$$
    Similarly, ${\cal R}_\infty(t)$ is almost-periodic in $t$ and, in general, does not decay as $t\to\infty$.

    This oscillatory, non-decaying character of the memory kernel is in stark contrast with the exponentially decaying kernels obtained in \cite{Cas2}, where the viscous dissipation inside the fluid inclusions acts as a damping mechanism. In the present case $\gamma=\infty$, the inclusions are purely elastic and isolated, so there is no microscopic dissipation and the memory persists indefinitely.
\end{remark}

\begin{remark}[Behavior of the memory kernels for $\gamma\in(0,\infty)$]\label{rem:damped}
    For finite $\gamma\in(0,\infty)$, the semigroup ${\cal G}(t)$ generated by ${\cal A}_\gamma$ governs the microscopic system (\ref{carCGgf}), in which the elastic oscillations in ${\cal O}$ are coupled to the exterior Stokes problem in $\RR^3\setminus{\cal O}$. The energy identity (\ref{consegf}) shows that the total microscopic energy is dissipated by the viscous term:
    $${1\over 2}{d\over dt}\int_{\cal O}\big(\rho^s|\partial_t\widetilde u|^2+Ae_y(\widetilde u):e_y(\widetilde u)\big)dy=-2\gamma\mu\int_{\RR^3\setminus{\cal O}}|e_y(\widetilde v)|^2dy\leq 0.$$
    As a consequence, the memory kernels ${\cal R}_\gamma$ and ${\cal S}_\gamma$ defined by (\ref{decRcS}) satisfy (\ref{acnogf}), which implies that they are small for $t$ large. Therefore, in contrast with the case $\gamma=\infty$ (Remark~\ref{rem:spectral}), the memory kernels for finite $\gamma$ go  to zero at infinity. The physical interpretation is clear: the viscous exterior Stokes flow acts as a damping mechanism on the elastic oscillations of the inclusions, producing a fading effective memory.
\end{remark}
\par\bigskip\noindent
{\bf Acknowledgments:}
This work  has been partially supported by FEDER project PID2023-149186NB-I00 and the IMUS-Maria de Maeztu grant CEX2024-001517-M - Apoyo a Unidades de Excelencia Mar\'ia de Maeztu. Both projects are funded by the {\it Ministerio de Ciencia, Innovaci\'on y Universidades} of the government of Spain.

\end{document}